\numberwithin{equation}{section}
\newcommand{\p}{\partial}
\theoremstyle{plain}
\newtheorem{theorem}{Theorem}[section]
\newtheorem*{theorem*}{Theorem}
\newtheorem{lemma}[theorem]{Lemma}
\newtheorem{proposition}[theorem]{Proposition}
\newtheorem*{proposition*}{Proposition}
\newtheorem*{conjecture*}{Conjecture}
\theoremstyle{definition}
\newtheorem{definition}[theorem]{Definition}
\theoremstyle{remark}
\newtheorem{remark}[theorem]{Remark}
\newtheorem*{remark*}{Remark}
\newcommand{\abs}[1]{\left\lvert #1 \right\rvert}
\newcommand{\norm}[1]{\left\lVert #1 \right\rVert}
\newcommand{\R}{\mathbb{R}}
\newcommand{\C}{\mathbb{C}}
\renewcommand{\d}{\partial}
\newcommand{\db}{\overline{\partial}}
\title[Radiating and non-radiating sources in elasticity]{Radiating and non-radiating sources in elasticity}
\author[E. Bl{\aa}sten]{Eemeli Bl{\aa}sten}
\address{Jockey Club Institute for Advanced Study, Hong Kong University of Science and Technology, Hong Kong SAR.}
\email{eemeli.blasten@iki.fi}
\author[Y.-H. Lin]{Yi-Hsuan Lin}
\address{Institute for Advanced Study, Hong Kong University of Science and Technology, Hong Kong \& Department of Mathematics and Statistics, University of Jyv\"askyl\"a, Finland}
\email{yihsuanlin3@gmail.com}
\begin{document}

\begin{abstract}
  In this work, we study the inverse source problem of a fixed
  frequency for the Navier's equation.  We investigate nonradiating
  external forces. If the support of such a force has a convex or
  non-convex corner or edge on their boundary, the force must be
  vanishing there. The vanishing property at corners and edges holds
  also for sufficiently smooth transmission eigenfunctions in
  elasticity. The idea originates from the enclosure method: an energy
  identity and a new type exponential solutions for the Navier's
  equation.
  
  \medskip 
  \noindent{\bf Keywords} Inverse source problem, elastic waves, Navier's equation, exponential solutions, transmission eigenfunctions

  \medskip
  \noindent{\bf Mathematics Subject Classification (2010)}: 35P25,
  78A46, 74B05 (primary); 51M20 (secondary).
\end{abstract}

\maketitle

\tableofcontents

\section{Introduction}\label{Section 1}

\subsection{Motivation and background}

The inverse source problem is an important research topic in
scattering theory. Its goal is to determine the shape of unknown
sources by measuring the radiated wave patterns in the far-field. This
problem is motivated by several scientific and industrial areas, such
as medical imaging \cite{fokas2004unique} and photo-acoustic
tomography \cite{arridge1999optical}. A classical example outside of
elasticity is to utilize electric or magnetic fields on the periphery
of the human body, such as in non-invasive brain image
reconstruction. Mathematically, the inverse source problem for
acoustic, electromagnetic and elastic waves has been studied widely by
many researchers
\cite{albanese2006inverse,bao2002inverse,el2011inverse,bao2014inverse,de2017reconstruction,Ikehata1999,li2011inverse}. Furthermore,
the inverse source problem can also be regarded as a basic
mathematical method to study diffusion-based optical tomography, lidar
imaging and fluorescence microscopy.

The inverse source problem of a fixed frequency is ill-posed and its
solution requires a-priori knowledge. These types of problems have
picked interest before too, but for different equations. The umbrella
term \emph{Schiffer's problem} \cite{CK} captures the essence of these
problems: from the knowledge of one far-field pattern, or the Cauchy
data of one solution, determine the shape of a scatterer. \cite{FI}
showed the unique determination of penetrable inclusions for the
conductivity equation by the Cauchy boundary data of one solution,
with reconstruction via the enclosure method appearing in
\cite{Ikehata2000}. In \cite{Ikehata1999}, the enclosure method is
used to reconstruct the the convex hull of a polyhedral source or
potential for the Helmholtz equation given a single measurement. In
\cite{ikehata2009extracting}, the authors adapted the enclosure method
for inverse obstacle problem by using a single set of the Cauchy data
to the Navier's equation. Furthermore, in a slightly different
context, the Schiffer's problem was studied in a nonlocal setup, see
\cite{cao2017simultaneously}.

A new approach to this problem began from the realization that an
acoustic potential that has a corner jump would scatter any incident
wave at any wavenumbers \cite{BPS}. This applies to the potential
scattering setting without a source. However trying to apply it to
source scattering produces an integration by parts formula used in the
enclosure method \cite{Ikehata1999}. The corner scattering method was
then extended and used for the shape-determination of polyhedral
potential scatterers \cite{HSV}, and to sources in \cite{Bsource}. The
latter used a new type of complex geometrical optics solution and the
enclosure method. These techniques also led to the surprising
discovery that interior transmission eigenfunctions \cite{CKP} vanish
at convex corners \cite{BLvanishing,BLvanishingNum}. For more corner
and edge scattering results, we also refer readers to
\cite{elschner2015corners,elschner2018acoustic}. In this paper we
extend the single measurement enclosure method technique to the
elastic setting.

\subsection{The mathematical formulation of elastic waves}

Let $\lambda$, $\mu$ be the Lam\'e constants satisfying the following strong convexity condition 
\begin{align}\label{strong convexity condition}
	\mu >0 \text{ and }n\lambda+2\mu >0, \text{ for }n=2,3.
\end{align}
Let $\bm{f}\in \C^n$ be an external force, which is assumed to be compactly supported. More specifically, the function $\bm{f}=\chi_\Omega \bm\varphi$, where $\chi_\Omega$ is the characteristic function of a bounded Lipschitz domain $\Omega$ in $\R^n$ and $\bm{\varphi}\in L^\infty (\R^n;\C^n)$. 
Given an angular frequency $\omega >0$, let $\bm{u}(x)=(u_{\ell}(x))_{\ell=1}^{n}$ be the displacement vector field. Then the time-harmonic elastic system is
\begin{align}\label{elasticity system}
\lambda \Delta \bm{u} + (\lambda+\mu)\nabla \nabla\cdot \bm{u}+\omega^2 \bm{u}=\bm{f} \text{ in }\R^n.
\end{align}

Via the well-known Helmholtz decomposition in $\mathbb{R}^{n}\backslash\overline{\Omega}$,
one can see that the scattered field can be decomposed as
\[
\bm{u}=\bm{u}_{p}+\bm{u}_{s} \text{ in }\R^n \setminus \overline{\Omega},
\]
with 
\[
\bm{u}_{p}=-\dfrac{1}{\omega_{p}^{2}}\nabla(\nabla \cdot \bm{u})\mbox{ and } \bm{u}_{s}=\dfrac{1}{\omega_{s}^{2}}\mbox{rot}(\mbox{rot}\bm u),
\]
where $\omega_p$ and $\omega_s$ are the compressional and shear wave numbers, respectively, which are given by $$\omega_{p}=\dfrac{\omega}{\sqrt{\lambda+2\mu}} \text{ and } 
\omega_{s}=\dfrac{\omega}{\sqrt{\mu}}. $$ Above $\mbox{rot}=\nabla^\perp$ represents $\frac{\pi}{2}$ clockwise rotation of the gradient   when $n=2$, and $\mbox{rot}=\nabla\times$ stands for the curl operator when $n=3$. The vector fields $\bm u_{p}$ and $\bm u_{s}$ are
called the compressional and shear parts of
the scattered vector field $\bm u$, respectively. In addition, recall that $\bm{f}=0$ in $\R^n \setminus\overline{\Omega}$. Then $\bm{u}_p$ and $\bm{u}_s$ satisfy
the Helmholtz equation 
\begin{align}\label{Two Helmholtz equations}
\begin{split}
&(\Delta+\omega_{p}^{2})\bm{u}_{p}=0\mbox{ and }\mbox{rot}\bm{u}_{p}=0\mbox{ in }\mathbb{R}^{n}\backslash\overline{\Omega},\\
&(\Delta+\omega_{s}^{2})\bm{u}_{s}=0\mbox{ and }\nabla \cdot\bm{u}_{s}=0\mbox{ in }\mathbb{R}^{n}\backslash\overline{\Omega}.
\end{split}
\end{align}
Therefore, for the elastic scattering problem of \cref{elasticity system}, we need to pose the \textit{Kupradze radiation condition} 
\begin{equation}
\lim_{r\to\infty}\left(\dfrac{\partial \bm{u}_{p}}{\partial r}-i\omega_{p}\bm{u}_{p}\right)=0\mbox{ and }\lim_{r\to\infty}\left(\dfrac{\partial \bm{u}_{s}}{\partial r}-i\omega_{s}\bm{u}_{s}\right)=0,\quad r=|x|,\label{Kupradze radiation condition}
\end{equation}
uniformly in all directions $\widehat{x}={x}/{|x|}$. 
Moreover, one can also expand the functions $\bm{u}_s$ and $\bm{u}_p$ as 
\begin{align}\label{elastic far fields}
\begin{split}
& \bm{u}_s(x)=\dfrac{1}{4\pi }\dfrac{e^{i\omega_s|x|}}{|x|^{\frac{n-1}{2}}}\bm{u}_s ^\infty (\widehat{x})+O\left(|x|^{-\frac{n+1}{2}}\right) \text{ as }|x|\to \infty,\\
& \bm{u}_p(x)=\dfrac{1}{4\pi }\dfrac{e^{i\omega_p|x|}}{|x|^{\frac{n-1}{2}}}\bm{u}_p ^\infty (\widehat{x})+O\left(|x|^{-\frac{n+1}{2}}\right) \text{ as }|x|\to \infty,
\end{split}
\end{align}
for $n=2,3$, where $\bm{u}_s^\infty $ and $\bm{u}_p^\infty$ denote the \emph{transversal} and \emph{longitudinal elastic far fields} radiated by the source $\bm{f}$. Furthermore, $\bm{u}_s^\infty$ and $\bm{u}_p^\infty$ can be explicitly represented by 
$$
\bm{u}_s^\infty (\bm{e})= \Pi_{\bm{e}^\perp}\left(\int_{\R^n}e^{-i\omega_s\bm{e}\cdot y}\bm{f}(y)dy\right), \ 
\bm{u}_p^\infty (\bm{e})= \Pi_{\bm{e}}\left(\int_{\R^n}e^{-i\omega_p\bm{e}\cdot y}\bm{f}(y)dy\right),
$$ for any unit vector $\bm{e}\in \mathbb{S}^{n-1}$, where
$\Pi_{\bm{e}}$ is the projection operator with respect to $\bm{e}$.
Notice that the vector fields $\bm{u}_s^\infty$ and $\bm{u}_p^\infty$
are the tangential and the normal components of the Fourier transform
of $\bm{f}$ evaluated on $\mathbb{S}^{n-1}$. Note that the elastic far
fields \eqref{elastic far fields} of the Navier's equation are derived
using the Helmholtz decomposition of \cref{elasticity system} and the
far-field patterns for the Helmholtz equations of \eqref{Two Helmholtz
  equations}, which is allowed by the radiation conditions of
\cref{Kupradze radiation condition}. For a more detailed discussion,
we refer readers to
\cite{griesmaier2014far,griesmaier2018uncertainty,hahner1998acoustic}.

It is known that for a given source function $\bm{f}\in L^\infty (\Omega;\C^n)\subset L^2(\Omega;\C^n)$, the scattering problem of \cref{elasticity system,Kupradze radiation condition} has a unique solution (see \cite{bao2017inverse} for instance):

$$
\bm{u}(x;\omega)=\int_\Omega \bm{G}(x,y;\omega)\cdot \bm{f}(y)dy,
$$
where $\bm{G}(x,y;\omega)\in \C^{n\times n}$ is the Green's tensor for the Navier's equation in \cref{elasticity system}. More precisely, the Green's tensor can be expressed as 
$$
\bm{G}(x,y;\omega)=\dfrac{1}{\mu} G_n(x,y;\omega_s)I_n+\dfrac{1}{\omega^2}\nabla_x \nabla_x^\perp \left(G_n(x,y;\omega_s)-G_n(x,y;\omega_p)\right),
$$
where $I_n$ denotes an $n\times n$ identity matrix and 
$$
G_n(x,y;\omega)= \begin{cases}
\dfrac{i}{4}H_0 ^{(1)}(\omega |x-y|) & \text{ when }n=2, \\
\dfrac{1}{4\pi}\dfrac{e^{i\omega |x-y|}}{|x-y|} & \text{ when }n=3,
\end{cases}
$$
is the fundamental solution for the Helmholtz equation. Here $H_0 ^{(1)}$ is the Hankel function of the first kind with order zero.

We prove the following three theorems. The first one is for the inverse source problem. It states that if an external force is applied to a region having a corner or edge on an elastic body, it creates a propagating elastic wave at \emph{any} wavenumber.

\begin{theorem}\label{thm1}
  Let $\bm{f} = \chi_\Omega \bm{\varphi}$ for a bounded domain
  $\Omega\subset\R^n$, $n\in\{2,3\}$ and bounded vector function
  $\bm{\varphi} \in L^\infty(\R^n)$. Let $\omega,\mu>0$,
  $n\lambda+2\mu>0$ and $\bm{u}\in H^2_{loc}(\R^n)$ satisfy
  \cref{elasticity system} and the radiation condition of
  \cref{Kupradze radiation condition}.

  Assume that $\Omega$ has a corner (2D) or an edge (3D) that can be
  connected to infinity by a path in $\R^n\setminus \overline\Omega$,
  and that $\bm{\varphi}$ is H\"older-continuous near it. If $\bm{u}$
  has zero far-field pattern, then $\bm{\varphi}=0$ on the corner or
  edge, i.e. $\bm{\varphi}$ is the zero vector. In other words, $f$
  has no jumps at these locations.
\end{theorem}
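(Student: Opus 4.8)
The plan is to run the single-measurement enclosure method of \cite{Ikehata1999,Bsource,BPS} for the Navier operator $\mathcal{L}$ appearing on the left-hand side of \cref{elasticity system}, with the two ingredients advertised in the abstract: an energy (Betti) identity and exponential solutions. The first step is to produce a \emph{localized orthogonality relation} at the corner. Since the far-field pattern vanishes, both components $\bm{u}_p^\infty$ and $\bm{u}_s^\infty$ in \eqref{elastic far fields} are zero, so Rellich's lemma applied to each Helmholtz equation in \eqref{Two Helmholtz equations} gives $\bm{u}_p=\bm{u}_s=0$, hence $\bm{u}=0$, in the unbounded component of $\R^n\setminus\overline\Omega$ (real-analyticity propagates the vanishing from $\{\abs{x}>R\}$ to the whole component). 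Placing the corner vertex, or a point of the edge, at the origin, I choose $h>0$ so small that $B(0,h)\setminus\overline\Omega$ lies in that component; then $\bm{u}$ and its traction vanish on $\partial\Omega\cap B(0,h)$. Writing $C_h=\Omega\cap B(0,h)$ and applying the second Betti identity on $C_h$ against any homogeneous solution $\bm{v}$ (so $\mathcal{L}\bm{v}=0$), the symmetric $\omega^2$ terms cancel and the boundary contribution on $\partial\Omega\cap B(0,h)$ drops out, leaving $\int_{C_h}\bm{\varphi}\cdot\bm{v}\,dx$ equal to a traction integral over the cap $\Omega\cap\partial B(0,h)$ alone; the hypothesis $\bm{u}\in H^2_{loc}(\R^n)$ guarantees the traces and the validity of the identity.

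Next I introduce the exponential solutions. The homogeneous equation admits two families: the compressional $\bm{v}_p=\rho\,e^{\rho\cdot x}$ with $\rho\cdot\rho=-\omega_p^2$ (curl-free) and the shear $\bm{v}_s=\bm{a}\,e^{\rho\cdot x}$ with $\rho\cdot\bm{a}=0$ and $\rho\cdot\rho=-\omega_s^2$ (divergence-free), which one checks directly against $\mathcal{L}$. I parametrize $\rho=\tau d+i\sqrt{\tau^2+\omega_\bullet^2}\,d^\perp$ with real orthonormal $d,d^\perp$, so that $\Re(\rho\cdot x)=\tau\,d\cdot x$ and $\rho\sim\tau(d+id^\perp)$ as $\tau\to\infty$. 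If $d$ is \emph{admissible} for the tangent cone $\mathcal{C}$ of $\Omega$ at the origin, meaning $d\cdot\theta\le-c<0$ for every unit $\theta\in\mathcal{C}$, then $\bm{v}$ decays into $C_h$, the cap integral is $O(e^{-c\tau h})$, and consequently $\int_{C_h}\bm{\varphi}\cdot\bm{v}\,dx=O(e^{-c\tau h})$.

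I then carry out the asymptotic and linear-algebraic step. Using the H\"older bound $\abs{\bm{\varphi}(x)-\bm{\varphi}(0)}\lesssim\abs{x}^\alpha$, I split $\bm{\varphi}=\bm{\varphi}(0)+(\bm{\varphi}-\bm{\varphi}(0))$; the rescaling $x\mapsto x/\tau$ gives $\int_{\mathcal{C}}e^{\rho\cdot x}\,dx=C(\mathcal{C},d,d^\perp)\,\tau^{-n}+O(e^{-c\tau h})$ with a nonzero constant, while the H\"older remainder is $O(\tau^{-n-\alpha})$. Inserting $\bm{v}_p$ and $\bm{v}_s$, dividing by the appropriate power of $\tau$, and letting $\tau\to\infty$ yields $(d+id^\perp)\cdot\bm{\varphi}(0)=0$ from the compressional family and $\bm{a}\cdot\bm{\varphi}(0)=0$ from the shear family. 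In 2D the opposite choice $\rho=\tau d-i\sqrt{\tau^2+\omega_\bullet^2}\,d^\perp$ has the same real part, is therefore also admissible, and gives $(d-id^\perp)\cdot\bm{\varphi}(0)=0$; adding the two relations forces $d\cdot\bm{\varphi}(0)=d^\perp\cdot\bm{\varphi}(0)=0$, hence $\bm{\varphi}(0)=0$. In 3D one rotates $d^\perp$ and combines the two families over admissible $d$ until the admissible amplitude vectors span $\C^3$. An edge is handled by localizing at a point of the edge and using exponential solutions that decay in the cross-sectional plane and stay bounded along the edge, reducing the estimate to the 2D corner.

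The main obstacle is the nonvanishing of the corner contribution $C(\mathcal{C},d,d^\perp)$ together with the existence of admissible directions. For a convex corner there is an open set of admissible $d$, and $\int_{\mathcal{C}}e^{(d+id^\perp)\cdot y}\,dy$ factorizes into a product that is visibly nonzero. The genuinely delicate case is the non-convex (reflex) corner: no direction makes the exponential decay over the entire sector, so the cone must be split into convex pieces and the corner contribution defined by analytic continuation, the nonvanishing then following from a careful cancellation/contour analysis in the spirit of \cite{BPS,elschner2015corners}. Verifying that the $H^2$ traces of $\bm{u}$ on the cap remain bounded uniformly in $\tau$, and that the distinct wave speeds $\omega_p\neq\omega_s$ do not obstruct the spanning argument, are the remaining points, but these are routine once the nonvanishing lemma is in hand.
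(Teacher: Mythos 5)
Your first step (Rellich plus unique continuation to get $\bm{u}=\bm{T_\nu u}=0$ on $\partial\Omega$ near the corner, then an energy identity on $\Omega\cap B(0,h)$ against homogeneous solutions) matches the paper. The genuine gap is in your choice of exponential solutions. The theorem covers \emph{non-convex} corners (the paper's \cref{2Dprop} allows any opening angle $\theta_M-\theta_m\in(0,2\pi)$ except $\pi$), but your linear-phase CGO ansatz $e^{\rho\cdot x}$ with $\Re(\rho\cdot x)=\tau\, d\cdot x$ requires an admissible direction $d$ with $d\cdot\theta\le -c<0$ on the whole tangent cone, which exists only when the cone lies in an open half-space, i.e.\ for opening angle $<\pi$. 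Your proposed repair for reflex corners---split the cone into convex pieces and define the corner contribution by analytic continuation---does not work, because the failure is not merely in evaluating $\int_{\mathcal K}e^{\rho\cdot y}dy$: for a reflex cone, \emph{every} $d$ makes $e^{\tau d\cdot x}$ grow exponentially on part of $\Omega\cap\partial B(0,h)$, so the cap (traction) integral in your localized orthogonality relation blows up as $\tau\to\infty$ and no identity of the form $\int_{C_h}\bm{\varphi}\cdot\bm{v}\,dx=O(e^{-c\tau h})$ is available in the first place. This is exactly the obstruction the paper's ``new type'' solution is designed to remove: $\bm{v}=(1,i)^T e^{-s\sqrt z}$ (\cref{vDef}) has $\Re\sqrt z=\sqrt{\abs z}\cos(\theta/2)>0$ on the entire slit plane $\abs{\theta}<\pi$, hence decays on sectors of opening up to $2\pi$ once the branch cut is placed outside $\Omega$ near the corner. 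Note also that this $\bm{v}$ solves $\bm{\mathcal L}\bm v=0$ \emph{without} the $\omega^2$ term; the paper absorbs $\omega^2\bm u$ into the source as $\widetilde{\bm f}=\bm f-\omega^2\bm u$ (H\"older by Sobolev embedding, with $\widetilde{\bm f}(0)=\bm f(0)$ since $\bm u$ vanishes at the corner), so no phase condition $\rho\cdot\rho=-\omega_\bullet^2$ is ever needed. The price is a branch-point singularity at the vertex, handled by the dedicated integration-by-parts lemma (\cref{vIntId}), and the nonvanishing of the corner constant is the explicit computation $\int_{\mathcal K}v_1\,dx=6i(e^{-2i\theta_M}-e^{-2i\theta_m})s^{-4}$ of \cref{2DupperAndLower}, which fails only at the flat angle $\pi$. (Your extraction of $\bm\varphi(0)=0$ from two isotropic relations $(d\pm id^\perp)\cdot\bm\varphi(0)=0$ is fine in the convex case; the paper instead gets the single relation $f_1(0)+if_2(0)=0$ and reduces beforehand to real-valued $\bm f$ by splitting into real and imaginary parts.)

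Your 3D edge treatment also has a quantitative problem as sketched. If you take $\rho$ orthogonal to the edge so that $\bm v$ merely oscillates along it, the Betti identity over a finite cylinder $(\mathcal K\cap B)\times(-L,L)$ produces end-cap boundary terms at $x_3=\pm L$ where neither $\bm u$ nor $\bm v$ is small; these contribute $O(\tau^{-1})$, which dominates the corner term $\sim\abs{\rho}\,\tau^{-3}=O(\tau^{-2})$, so the limit extracts nothing. The paper avoids this with the dimension-reduction operator $\bm R_\xi$ of \cref{Lem dim reduce}: a compactly supported cutoff $\phi$ in the partial Fourier transform eliminates end-cap terms entirely, the commutator terms $I_\xi, II_\xi$ are shown to vanish on the faces $\Gamma$ precisely because $\bm u=\bm{T_\nu u}=0$ forces the full gradient of $\bm u$ to vanish there, and then the 2D result plus Fourier inversion gives $\bm f=0$ along the edge (\cref{3Dprop}). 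So your proposal proves at best the convex-corner 2D case; both the reflex-corner case and the edge case require the paper's specific constructions, not routine verifications.
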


\begin{remark*}
Recall that by the Helmholtz decomposition, one can reduce the
Navier's equation of \eqref{elasticity system} into two Helmholtz
equations as in \eqref{Two Helmholtz equations} outside of
$\Omega$. One might wonder if it were possible to prove \cref{thm1}
using the same decomposition for the source term and using a version
of \cref{thm1} proven in \cite{Bsource} that holds for the two
Helmholtz equations of \eqref{Two Helmholtz equations}
separately. After all, by the Helmholtz decomposition, one could split
$f=f_p+f_s$ and rewrite \cref{elasticity system} into
\[
(\Delta +\omega_p ^2 )u_p=f_p, \text{ and }(\Delta +\omega_s ^2
)u_s=f_s \text{ in }\R^n.
\]
However there is a problem of smoothness. \cref{thm1} assumes only
that the function $f$ is H\"older-continuous near a corner. This does
not guarantee that both of $f_p$ and $f_s$ are H\"older-continuous
near the same corner. This prevents the use of
\cite[Theorem~1.1]{Bsource} to derive the same conclusion for the
Navier's equation.
\end{remark*}

In order to prove \cref{thm1}, we  need to construct a new type exponential solution for the Navier's equation in the plane. The classical exponential solutions for the Navier's equation is called the complex geometrical optics (CGO) solutions. As a matter of fact, the CGO solutions \cite{imanuvilov2015global} were utilized to show global uniqueness for the inverse problem of the isotropic elasticity system with infinite measurements. In \cite{imanuvilov2015global}, the authors introduced CGO solutions for the isotropic elasticity system by decoupling the system into weakly coupled systems. Their principal part is an exponential of quadratic complex-valued function.

\smallskip
The second theorem shows source shape and boundary value
determination from a single far-field pattern.

\begin{theorem}\label{thm2}
  Let $n\in\{2,3\}$ and $\Omega,\Omega'\subset\R^n$ be bounded convex
  polyhedral domains. Let $\bm{\varphi},\bm{\varphi'} \in
  C^\alpha(\R^n)$, for some $\alpha\in (0,1)$ and have nonzero value on
  $\partial\Omega, \partial\Omega'$.

  Define $\bm{f} = \chi_\Omega\bm{\varphi}$, $\bm{f'} =
  \chi_{\Omega'}\bm{\varphi'}$. Let $\omega,\mu>0$, $n\lambda+2\mu>0$
  and $\bm{u},\bm{u'}\in H^2_{loc}(\R^n)$ have elastic sources
  $\bm{f},\bm{f'}$. In other words they satisfy \cref{elasticity
    system} with the radiation condition of \cref{Kupradze radiation
    condition}.

  If $\bm{u}$ and $\bm{u'}$ have the same far-field pattern then
  $\Omega=\Omega'$ and $\bm{\varphi}=\bm{\varphi'}$ at each of their
  vertices and in three dimensions, edges.
\end{theorem}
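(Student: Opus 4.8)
The plan is to reduce everything to the single-corner vanishing result of \cref{thm1} applied to the difference of the two fields. Set $\bm{w}:=\bm{u}-\bm{u'}$ and $\bm{g}:=\bm{f}-\bm{f'}=\chi_\Omega\bm{\varphi}-\chi_{\Omega'}\bm{\varphi'}$. Then $\bm{w}\in H^2_{loc}(\R^n)$ solves \cref{elasticity system} with source $\bm{g}$ and satisfies the radiation condition of \cref{Kupradze radiation condition}, and by hypothesis its far-field pattern vanishes. Since $\supp\bm{g}\subseteq\overline{\Omega\cup\Omega'}$, in the exterior the Helmholtz decomposition \eqref{Two Helmholtz equations} applies to $\bm{w}$, so I may write $\bm{w}=\bm{w}_p+\bm{w}_s$; by the splitting \eqref{elastic far fields} into orthogonal longitudinal and transversal parts, the vanishing of the elastic far field forces $\bm{w}_p^\infty=\bm{w}_s^\infty=0$. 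Rellich's lemma applied to each Helmholtz component then gives $\bm{w}=0$ on the unbounded connected component $U$ of $\R^n\setminus\overline{\Omega\cup\Omega'}$. Thus $\bm{g}$ is a nonradiating source and $\bm{w}$ vanishes on the exterior side of every boundary point of $U$.

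First I would prove $\Omega=\Omega'$ by contradiction. If $\Omega\neq\Omega'$, then by convexity one of them, say $\Omega$, satisfies $\overline\Omega\not\subseteq\overline{\Omega'}$, so strict separation of a point of $\overline\Omega\setminus\overline{\Omega'}$ from $\overline{\Omega'}$ yields a direction $\xi\in\mathbb{S}^{n-1}$ with $h_\Omega(\xi)>h_{\Omega'}(\xi)$, where $h_D(\xi):=\max_{x\in\overline{D}}\xi\cdot x$ is the support function. Perturbing $\xi$ slightly (a full-measure condition) I may assume in addition that the maximum of $x\mapsto\xi\cdot x$ over $\overline\Omega$ is attained at a single vertex $v$; since $\xi\cdot v=h_\Omega(\xi)>h_{\Omega'}(\xi)\ge\xi\cdot y$ for all $y\in\overline{\Omega'}$, this vertex lies outside the closed set $\overline{\Omega'}$, and the ray $\{v+t\xi:t>0\}$ escapes to infinity inside $U$. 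Hence a whole neighborhood of $v$ misses $\overline{\Omega'}$, so there $\bm{g}=\chi_\Omega\bm{\varphi}$, a genuine convex corner (2D) or edge (3D) of $\supp\bm{g}$, with $\bm{\varphi}\in C^\alpha$ and $\bm{w}=0$ on its exterior side. The local vanishing argument of \cref{thm1} then yields $\bm{\varphi}=0$ at $v$ (in 3D, along the initial portion of an edge of $\Omega$ issuing from $v$, whence $\bm{\varphi}(v)=0$ by continuity), contradicting the hypothesis that $\bm{\varphi}$ is nonzero on $\partial\Omega$. The symmetric argument rules out $\overline{\Omega'}\not\subseteq\overline\Omega$, so $\overline\Omega=\overline{\Omega'}$ and therefore $\Omega=\Omega'$.

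Once $\Omega=\Omega'$, the source of $\bm{w}$ becomes $\bm{g}=\chi_\Omega\bm{\psi}$ with $\bm{\psi}:=\bm{\varphi}-\bm{\varphi'}\in C^\alpha(\R^n)$ supported on a single convex polyhedron, so now \cref{thm1} applies verbatim. Every vertex of $\Omega$ (in 2D), and every edge of $\Omega$ (in 3D), is an extreme boundary point of a convex polyhedron and can be joined to infinity through $\R^n\setminus\overline\Omega$; applying \cref{thm1} at each of them gives $\bm{\psi}=0$ there, i.e. $\bm{\varphi}=\bm{\varphi'}$ at every vertex and, in three dimensions, along every edge. For a three-dimensional vertex no separate argument is needed: it is the common endpoint of edges on which $\bm{\psi}$ already vanishes, so $\bm{\psi}(v)=0$ follows from the $C^\alpha$-continuity of $\bm{\psi}$.

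The step I expect to be the main obstacle is the use of \cref{thm1} in the proof of $\Omega=\Omega'$, because there the source $\bm{g}$ is a \emph{difference} $\chi_\Omega\bm{\varphi}-\chi_{\Omega'}\bm{\varphi'}$ and hence is not globally of the single-domain form required by the statement of \cref{thm1}. The resolution is that the corner/edge argument behind \cref{thm1} is purely local: its only global inputs are that $\bm{w}$ solves the Navier system with the radiation condition and that $\bm{w}$ vanishes on the exterior side of the corner, the latter being supplied by the Rellich step above. Near the extreme vertex $v$ the source $\bm{g}$ coincides with the single-corner source $\chi_\Omega\bm{\varphi}$, so all the hypotheses needed to run that local argument are met; the care required is to check that the exponential-solution energy identity underlying \cref{thm1} sees only the $\chi_\Omega\bm{\varphi}$ part of $\bm{g}$ in a neighborhood of $v$. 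A secondary technical point is the genericity step isolating a single extreme vertex $v\notin\overline{\Omega'}$ while keeping $h_\Omega(\xi)>h_{\Omega'}(\xi)$; this is routine, since the latter is an open condition on $\xi$ and a unique maximizing vertex occurs off a set of directions of measure zero.
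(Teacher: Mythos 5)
Your proposal is correct and follows essentially the same route as the paper: Rellich's lemma plus unique continuation give vanishing of $\bm{w}=\bm{u}-\bm{u'}$ and its traction on the exterior side, convexity produces a corner (2D) or edge (3D) point of one domain outside the other, and the local vanishing results \cref{2Dprop} and \cref{3Dprop} --- which is exactly how the paper handles your (correctly flagged) concern that \cref{thm1} does not literally apply to the difference source, since it too invokes these local propositions rather than \cref{thm1} --- yield the contradiction and, once $\Omega=\Omega'$, the equality $\bm{\varphi}=\bm{\varphi'}$ at vertices and edges. Your additional support-function/genericity detail and the passage from nearby edge points to the 3D vertex by H\"older continuity are harmless refinements of the same argument.
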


Strictly speaking, we do not need to have $\bm{\varphi},\bm{\varphi'}$
H\"older-continuous everywhere or non-vanishing on the whole boundary
for the unique determination of the shape. Near the corners is
enough. In three dimensions, if they are in addition non-vanishing and
H\"older-continuous near the edges, then we can also deduce that
$\bm{\varphi}=\bm{\varphi'}$ on them.

\smallskip
\begin{definition}[Interior transmission eigenfunctions]
A pair $(\bm{v},\bm{w}) \in L^2(\Omega)\times L^2(\Omega)$ is called
\emph{interior transmission eigenfunctions} for the Navier equations
with density $V\in L^\infty(\Omega)$ at the \emph{interior
	transmission eigenvalue} $\omega\in\R_+$ if
\begin{align}\label{Equ of interior transmission eigenvalues}
\begin{cases}
\lambda \Delta \bm{w} + (\lambda+\mu)\nabla \nabla\cdot
\bm{w}+\omega^2 \bm{w} = 0, \\ 
\lambda \Delta \bm{v} +
(\lambda+\mu)\nabla \nabla\cdot \bm{v}+\omega^2(1+V) \bm{v} = 0,
\end{cases}
\end{align}
and $\bm{v}-\bm{w} \in H^2(\Omega)$ with $\bm{v}=\bm{w}$ and
$\bm{T_\nu v} = \bm{T_\nu w}$ on $\partial\Omega$. Nothing is imposed
on the boundary values of $\bm{v},\bm{w}$ individually.
\end{definition}

Above $\bm{T_\nu}$ is the boundary tration operator.
  \begin{definition} \label{boundary_traction}
    The \emph{boundary traction} operator $\bm{T_\nu}$ is defined as
    follows. In the two-dimensional case it is
    \begin{align*}
      \bm{T}_{\bm{\nu}}\bm{u}= 2\mu\dfrac{\partial
        \bm{u}}{\partial\bm{\nu}}+\lambda\bm{\nu}\nabla\cdot
      \bm{u}+\mu\bm{\nu}^{\perp}(\partial_{2}u_{1}-\partial_{1}u_{2}),
    \end{align*}
    where $\bm{\nu}=(\nu_1,\nu_2)$ is a unit outer normal on $\p
    \Omega$ and $\bm{\nu}^{\perp}:=(-\nu_2,\nu_1)$. In the three
    dimensional case,
    \begin{align*}
      \bm{T}_{\bm{\nu}}\bm{u}=2\mu\dfrac{\partial
        \bm{u}}{\partial\bm{\nu}}+\lambda\bm{\nu}\nabla\cdot
      \bm{u}+\mu\bm{\nu}\times(\nabla\times \bm{u}),
    \end{align*}
    where $\bm{\nu}=(\nu_1,\nu_2,\nu_3)$.
  \end{definition}

The interior transmission problem is a well studied problem in inverse
scattering theory. In particular the sampling method for solving the
inverse scattering problem fails at wavenumbers that are transmission
eigenvalues \cite{CK}. Notable results, \cite{CKP, RS, PS, CGH} and
the recent survey \cite{CHinBook}, have so far focused mostly on the
spectral properties of the transmission eigenvalues. Only recently,
e.g. \cite{BP,Robbiano,LV}, have results about the eigenfunctions
themselves started surfacing, in the form of completeness of the
eigenfunctions in some sense. These past results are mostly in the
context of acoustic scattering for the Helmholtz
equation. \cite{elasticITP} considers the fundamental properties in
the context of elasticity, and shows the existence and discreteness of
the transmission eigenvalues.

Recently in \cite{BLvanishing,BLvanishingNum,Bsource} it was shown
that under given smoothness and geometric assumptions the transmission
eigenfunctions for the Helmholtz equation vanish at convex corners of
the domain $\Omega$. We show the same conclusion for the interior
transmission problem for an elastic material with varying density.
\begin{theorem} \label{ITP}
  Let $n\in \{2,3\}$ and $\Omega\subset\R^n$ be a bounded domain. Let
  $V\in L^\infty(\Omega)$ be the material density, and $\mu>0$,
  $n\lambda+2\mu>0$ be constant Lam\'e parameters. Assume that
  $\omega>0$ is an interior transmission eigenvalue and
  $\bm{v},\bm{w}\in L^2(\Omega)$ are the corresponding transmission
  eigenfunctions defined by \cref{Equ of interior transmission eigenvalues}.

  Let $x_c$ be any vertex or edge of $\p \Omega$ around which $V$ and
  either one of $\bm{v},\bm{w}$ are $C^\alpha$ smooth, for some 
  $\alpha\in (0,1)$. Then so is the other, and $\bm{v}(x_c)=\bm{w}(x_c)=0$ if
  $V(x_c) \neq 0$.
\end{theorem}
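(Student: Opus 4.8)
The plan is to reduce \cref{ITP} to the inverse source result \cref{thm1} by passing to the difference $\bm m:=\bm v-\bm w$ of the two transmission eigenfunctions. Subtracting the two identities in \cref{Equ of interior transmission eigenvalues} shows that $\bm m$ satisfies
\[
  \lambda\Delta\bm m+(\lambda+\mu)\nabla\nabla\cdot\bm m+\omega^2\bm m=-\omega^2 V\bm v\quad\text{in }\Omega,
\]
i.e. \cref{elasticity system} with source $\bm f=\chi_\Omega\bm\varphi$ where $\bm\varphi:=-\omega^2 V\bm v$. The transmission conditions $\bm v=\bm w$ and $\bm T_{\bm\nu}\bm v=\bm T_{\bm\nu}\bm w$ on $\p\Omega$ say exactly that $\bm m$ has vanishing Cauchy data, $\bm m=\bm 0$ and $\bm T_{\bm\nu}\bm m=\bm 0$ there. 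Because both the Dirichlet trace and the traction of $\bm m$ vanish on $\p\Omega$, its full gradient trace vanishes there as well (a consequence of \cref{strong convexity condition}), so extending $\bm m$ by zero produces $\widetilde{\bm m}\in H^2_{\mathrm{loc}}(\R^n)$ that solves \cref{elasticity system} globally with the same source $\bm f$ and without any surface source on $\p\Omega$. Being identically zero outside $\Omega$, the field $\widetilde{\bm m}$ trivially satisfies \cref{Kupradze radiation condition} and has zero far-field pattern.

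Next I would settle the regularity dichotomy. Since $\Omega$ is Lipschitz at a vertex or edge, $\bm m\in H^2(\Omega)$ embeds into $C^\beta$ near $x_c$ (any $\beta<1$ in two dimensions, $\beta=1/2$ in three), and in particular $\bm m$ is continuous up to $x_c$ with $\bm m(x_c)=\bm 0$. Hence if one of $\bm v,\bm w$ is $C^\alpha$ near $x_c$, the other differs from it by $\bm m$ and is at least H\"older continuous there, which is all that the corner analysis below requires; and the source $\bm\varphi=-\omega^2 V\bm v$, a product of functions H\"older continuous near $x_c$ (using $V\in C^\alpha$), is itself H\"older continuous near $x_c$. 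To promote ``the other'' to the same exponent $C^\alpha$, I would insert the now-$C^\alpha$ right-hand side $-\omega^2 V\bm v$ into Schauder estimates for $\bm m$ together with its vanishing Cauchy data, obtaining $\bm m\in C^\alpha$ near $x_c$ and thus $\bm w=\bm v-\bm m\in C^\alpha$.

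With these preparations I would apply \cref{thm1} to $\widetilde{\bm m}$ and $\bm f=\chi_\Omega\bm\varphi$: a vertex or edge of the bounded domain $\Omega$ can be joined to infinity through the exterior in $\R^n\setminus\overline\Omega$, the density $\bm\varphi$ is H\"older continuous near $x_c$, and the far-field pattern vanishes. \cref{thm1} then forces $\bm\varphi(x_c)=\bm 0$, that is $\omega^2 V(x_c)\bm v(x_c)=\bm 0$. Since $\omega>0$, if $V(x_c)\neq 0$ we conclude $\bm v(x_c)=\bm 0$, and then $\bm w(x_c)=\bm v(x_c)-\bm m(x_c)=\bm 0$ as well, which is the assertion of \cref{ITP}.

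The step I expect to be the real obstacle is matching the hypotheses of \cref{thm1} to the present setting. That theorem is stated for a globally bounded density $\bm\varphi\in L^\infty(\R^n)$, whereas here $\bm v$ is only assumed to lie in $L^2(\Omega)$, so $\bm\varphi=-\omega^2 V\bm v$ need not be globally bounded. What actually drives the proof of \cref{thm1} is the energy identity $\int_\Omega\bm f\cdot\bm u_0\,dx=0$, obtained by pairing the source with the new exponential solution $\bm u_0$ of the homogeneous Navier equation and using the vanishing Cauchy data of $\bm m$ in Green's second identity, together with only the \emph{local} H\"older continuity of $\bm\varphi$ at the corner. Since $\bm u_0$ is bounded on a neighbourhood of the corner and $\bm f\in L^2(\Omega)$ already guarantees that $\bm f\cdot\bm u_0$ is integrable over $\Omega$, both the identity and the ensuing corner analysis carry over under the weaker hypotheses $\bm v\in L^2(\Omega)$ and $\bm\varphi$ H\"older only near $x_c$. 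Thus the essential point is to verify that the \emph{proof} of \cref{thm1}, rather than merely its statement, applies; granting that, the reduction above is immediate.
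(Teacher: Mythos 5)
Your reduction is correct and its core is the same as the paper's: set $\bm{u}=\bm{v}-\bm{w}$, note that it solves the Navier equation in $\Omega$ with source $\bm{f}=-\omega^2 V\bm{v}$ and vanishing Cauchy data $\bm{u}=\bm{T}_{\bm{\nu}}\bm{u}=0$ on $\p\Omega$, use the $H^2\hookrightarrow C^\alpha$ embedding for the regularity dichotomy, and conclude corner vanishing of the source. Where you diverge is the final invocation. The paper never leaves $\Omega$: it applies the purely local corner results \cref{2Dprop} and \cref{3Dprop} directly to $\bm{u}$, whose hypotheses ($\bm{u}\in H^2$ near the corner, zero Cauchy data on $\p\Omega\cap B$, $\bm{f}\in C^\alpha(\overline{\Omega\cap B})$) are exactly what the transmission conditions supply, with no global integrability or boundedness assumption on $\bm{v}$ at all. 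You instead extend $\bm{u}$ by zero (your gradient-trace argument, using $\mu>0$ and $\lambda+2\mu>0$, is sound --- compare the analogous step in the proof of \cref{Lem dim reduce}) and route through the global \cref{thm1}, which, as you yourself note, does not apply as stated because $\bm{\varphi}=-\omega^2 V\bm{v}$ is only $L^2$, not $L^\infty(\R^n)$. Your patch --- that the \emph{proof} of \cref{thm1} applies --- is correct, but the clean way to see this is that the proof of \cref{thm1} consists of a Rellich/unique-continuation step (vacuous in your setting, since your zero extension already vanishes outside $\Omega$) followed by exactly \cref{2Dprop} and \cref{3Dprop}; unwinding your detour therefore lands you at the paper's short direct proof. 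What your route buys is a conceptual link between transmission eigenfunctions and nonradiating sources; what the paper's route buys is that no extension, no radiation condition, and no discussion of global hypotheses are needed.

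Two minor caveats. First, the integral identity driving \cref{2Dprop} is localized to $\mathcal{K}\cap B$ (identity \eqref{intId}, via \cref{vIntId}), not an identity over all of $\Omega$, so the global integrability of $\bm{f}\cdot\bm{v}$ that you invoke in your last paragraph is never actually needed. Second, your Schauder step to promote the other eigenfunction to the \emph{same} exponent $\alpha$ is both unnecessary and delicate: the corner propositions only require some H\"older exponent, and since $H^2$ embeds into $C^\beta$ for $\beta<1/2$ in dimensions two and three, one may simply assume $\alpha<1/2$ without loss of generality, as the paper does; by contrast, Schauder regularity up to a corner or edge with Cauchy data depends on the opening angle and would require justification you do not give.
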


The paper is structured as follows. In \cref{Section 2}, we construct new type exponential solution for the two-dimensional isotropic elastic case and develop the integration by parts formula for the domain with corners. In \cref{Section 3}, we discuss the corner scattering in a plane, and we use the dimensional reduction technique to solve the three-dimensional case. In addition, we prove our theorems in \cref{Section 4}. Finally, in \cref{appendix}, we show that there is a nonradiating source $\bm{f}$ that generates elastic far fields that vanish in all directions.

\section{Exponential solutions of the Navier's equation in the plane}\label{Section 2}

We write $\bm{\mathcal{L}}:=\lambda \Delta +(\lambda+\mu)\nabla (\nabla \cdot)$ for the second order elliptic operator. If $\bm{u}$ solves the elasticity system of \cref{elasticity system} then $\bm{\mathcal{L}}u=\bm{f}$ in $\R^n$ for $n=2,3$, where $\bm{f}$ is the given compactly supported bounded source function. Next, let $\bm{v}$ be a solution of $\bm{\mathcal{L}}\bm{v}=0$. Via the standard regularity theory for the second order elliptic system (see \cite[Theorem 4.16]{mclean2000strongly} for instance), one can obtain that $u,v$ are $H^2_{loc}(\R^n)$ functions. Then
integrating by parts twice yields that 
\begin{align} \label{easyIntId}
\int_\Omega \bm{f}\cdot\bm{v} dx = \int_\Omega (\bm{\mathcal{L}}\bm{u})\cdot \bm{v}dx = \int_{\p \Omega} \left[(\bm{T}_{\bm{\nu}}\bm{u})\cdot\bm{v}- (\bm{T}_{\bm{\nu}}\bm{v})\cdot\bm{u} \right]dS,
\end{align}
where $\bm{T}_{\bm{\nu}}$ stands for the boundary traction operator (\cref{boundary_traction}) of \cref{elasticity system}.

In two dimensions, note that the system of \cref{elasticity system} can be expressed componentwise as 
\begin{equation} \label{sourceElasticity}
    \bm{\mathcal{L}} \bm{u} = \left(
    \begin{matrix}
      \lambda \Delta + (\lambda+\mu)\partial_1^2 & (\lambda+\mu)
      \partial_1\partial_2 \\ (\lambda+\mu) \partial_1 \partial_2 &
      \lambda \Delta + (\lambda+\mu)\partial_2^2
    \end{matrix}
    \right) \bm{u} = \bm{f} \text{ in }\R^2.
  \end{equation}
From now on we identify $\R^2$ with the complex plane $\C$, and we have the following lemma.
\begin{lemma} \label{vDef}
  Let $\Omega\subset\C$ such that $\Omega \cap (\R_-\cup\{0\}) =
  \emptyset$. Let
  \begin{equation}\label{exponential solution v}
    \bm{v}(x) = \left(
    \begin{matrix}
      \exp(-s\sqrt{z})\\ i\exp(-s\sqrt{z})
    \end{matrix}
    \right)
  \end{equation}
  where $z = x_1+ix_2$ and $s\in\R_+$. The complex square root is
  defined as
  \begin{equation} \label{sqrt}
    \sqrt{z} = \sqrt{\abs{z}} \left( \cos \frac\theta2 + i \sin
    \frac\theta2 \right)
  \end{equation}
  where $ -\pi < \theta \leq \pi$ is the argument of $z$. Then
  $\bm{v}$ satisfies $\bm{\mathcal L}\bm{v}=0$ in $\Omega$.
\end{lemma}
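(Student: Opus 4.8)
The plan is to exploit the analytic structure hidden in \eqref{exponential solution v}: writing $g(z) := \exp(-s\sqrt{z})$, the vector field is $\bm{v} = (g,\, ig)^{\top}$, and this particular pairing is engineered so that the two pieces of $\bm{\mathcal{L}} = \lambda\Delta + (\lambda+\mu)\nabla(\nabla\cdot)$ each annihilate $\bm{v}$. The first thing I would do is record that $g$ is holomorphic on $\Omega$. Because $\Omega \cap (\R_-\cup\{0\}) = \emptyset$, the argument $\theta$ in \eqref{sqrt} stays strictly inside $(-\pi,\pi)$ on all of $\Omega$, so the principal square root $\sqrt{z}$ is holomorphic there; as a composition of holomorphic maps, $g = \exp(-s\sqrt{z})$ is then holomorphic on $\Omega$. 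In Wirtinger notation, with $\partial_{\bar z} = \tfrac12(\partial_1 + i\partial_2)$ and $\partial_z = \tfrac12(\partial_1 - i\partial_2)$, holomorphy is exactly the statement $\partial_{\bar z} g = 0$.

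From this single fact I would extract the two needed consequences. First, harmonicity: since $\Delta = 4\,\partial_z\partial_{\bar z}$, we get $\Delta g = 0$, so both components $g$ and $ig$ of $\bm{v}$ are harmonic and therefore $\lambda\Delta\bm{v} = 0$. Second, the divergence-free property:
\[
  \nabla\cdot\bm{v} = \partial_1 g + \partial_2(ig) = \partial_1 g + i\,\partial_2 g = 2\,\partial_{\bar z} g = 0,
\]
so that $(\lambda+\mu)\nabla(\nabla\cdot\bm{v}) = 0$ as well. Adding the two contributions gives $\bm{\mathcal{L}}\bm{v} = \lambda\Delta\bm{v} + (\lambda+\mu)\nabla(\nabla\cdot\bm{v}) = 0$ on $\Omega$, which is the claim. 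As a cross-check one can instead verify the two scalar equations coming from the matrix form \eqref{sourceElasticity} directly: in each row the $\Delta$-terms drop out by harmonicity and the remaining terms collapse to $(\lambda+\mu)\,\partial_j(\partial_1 g + i\,\partial_2 g) = 2(\lambda+\mu)\,\partial_j\partial_{\bar z}g = 0$ for $j=1,2$, giving the same conclusion.

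There is no deep obstacle here; the content is really the observation that the ansatz $(g,\,ig)^{\top}$ with $g$ holomorphic is simultaneously harmonic and divergence-free, so it lies in the kernel of $\bm{\mathcal{L}}$ regardless of the values of $\lambda$ and $\mu$. The only point requiring genuine care is the branch cut: the hypothesis $\Omega\cap(\R_-\cup\{0\})=\emptyset$ is precisely what guarantees that $\sqrt{z}$, and hence $g$, is single-valued and holomorphic on $\Omega$, so that $\partial_{\bar z}g=0$ holds everywhere on the domain rather than only locally. I would therefore make sure to state explicitly that the differentiations above are justified by this holomorphy before invoking $\partial_{\bar z}g = 0$.
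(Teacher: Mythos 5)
Your proof is correct, and it takes a genuinely different (and slightly more structural) route than the paper. The paper works componentwise from the matrix form \eqref{sourceElasticity}: it rewrites the whole operator $\bm{\mathcal{L}}$ in the Wirtinger derivatives $\partial,\overline{\partial}$, substitutes $\bm{v}$, uses $\overline{\partial}v_1=\overline{\partial}v_2=0$ to collapse the two scalar equations to the single condition $\partial^2 v_1 + i\partial^2 v_2 = \partial^2(v_1+iv_2)=0$, and concludes from the algebraic identity $v_1+iv_2=0$. You instead split $\bm{\mathcal{L}}$ into its two summands $\lambda\Delta$ and $(\lambda+\mu)\nabla(\nabla\cdot)$ and annihilate each separately: harmonicity of the holomorphic $g$ handles the Laplacian term, and $\nabla\cdot\bm{v}=2\overline{\partial}g=0$ handles the grad-div term. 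The two arguments rest on the same mechanism --- holomorphy of $g$ together with the $(1,i)$ pairing --- and in fact your divergence identity is the paper's identity one derivative lower, since with $\overline{\partial}v_j=0$ one has $\nabla\cdot\bm{v}=\partial(v_1+iv_2)$; your closing ``cross-check'' paragraph is essentially the paper's actual proof. What your organization buys: it is coordinate-free, it makes transparent that the conclusion holds for \emph{arbitrary} $\lambda,\mu$ (no convexity condition enters), and it isolates the structural fact (harmonic plus divergence-free implies kernel of $\bm{\mathcal{L}}$) that the paper reuses implicitly later --- in the proof of \cref{vIntId} the vanishing of the divergence and curl terms of $\bm{v}$ (via $v_2=iv_1$) is exactly what simplifies the traction $\bm{T_\nu v}$ to \eqref{vTraction}. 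What the paper's computation buys is the explicit Wirtinger form of the matrix operator, verified row by row, which makes the cancellation checkable without invoking the vector-calculus decomposition. Your one point of ``genuine care'' --- that the hypothesis $\Omega\cap(\R_-\cup\{0\})=\emptyset$ keeps $\sqrt{z}$, hence $g$, single-valued and holomorphic so that $\overline{\partial}g=0$ holds on all of $\Omega$ --- is the same branch-cut observation with which the paper's proof opens, so nothing is missing.
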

\begin{proof}
  The complex square root is complex analytic in $\C$ except at the
  branch cut composed of the origin and the negative real axis. Thus
  the components of $\bm{v}=(v_1, v_2)^T$ are complex analytic in
  $\Omega$. Let us rewrite the differential operator in
  \cref{sourceElasticity} using the complex derivatives
  \[
  \d = \frac12 \big(\partial_1 - i \partial_2\big), \qquad \db =
  \frac12 \big(\partial_1 + i \partial_2\big).
  \]
  These imply that $\Delta = 4\d\db$ and that $\partial_1 = \d+\db$,
  $\partial_2 = i(\d-\db)$. Hence the abovementioned operator becomes
  \begin{equation}
    \bm{\mathcal{L}} = \left(
    \begin{matrix}
      4\lambda \d\db + (\lambda+\mu)(\d^2 + 2 \d\db + \db^2) &
      (\lambda+\mu) i (\d^2 - \db^2) \\ (\lambda+\mu) i (\d^2-\db^2) &
      4\lambda \d\db - (\lambda+\mu)(\d^2 -2\d\db + \db^2)
    \end{matrix}
    \right)
  \end{equation}
  and thus $\bm{\mathcal L} \bm{v} = 0$ if and only if
  \[
  \begin{cases}
    4\lambda\d\db v_1 + (\lambda+\mu)(\d^2 v_1 + 2\d\db v_1 + \db^2
    v_1 + i \d^2 v_2 - i \db^2 v_2) = 0,\\ 4\lambda\d\db v_2 +
    (\lambda+\mu)(i\d^2 v_1 - i\db^2 v_1 - \d^2 v_2 + 2\d\db v_2 -
    \db^2 v_2) = 0.
  \end{cases}
  \]
  Recall that our choice of $\bm{v}$ implies that $\db v_1 = \db v_2 =
  0$ in $\Omega$. Hence the above reduces to
  \[
  \d^2 v_1 + i \d^2 v_2 = 0, \qquad i\d^2 v_1 - \d^2 v_2 = 0
  \]
  which is just equivalent to $\d^2 v_1 + i \d^2 v_2 = 0$. But this is
  true since $v_1 + i v_2 = 0$. Hence $\bm{\mathcal L} \bm{v} = 0$ in
  $\Omega$.
\end{proof}

The following lemma provides the integration by parts formula around the corner, where $\bm{v}$ is not smooth.

\begin{lemma} \label{vIntId}
  Let $\bm{v}:\R^2\to\C^2$ be defined by \eqref{exponential solution v} from \cref{vDef} and
  \[
  \mathcal{K} = \{x\in\R^2 \mid x\neq0,\, \theta_m<\arg(x_1+ix_2)<\theta_M \}
  \]
  for given angles $-\pi<\theta_m<\theta_M<\pi$. Assume that
  $\bm{u}\in H^2(\mathcal{K}\cap B;\C^2)$ where $B=B(0,h)$ for some $h>0$. Moreover
  let $\bm{u}=\bm{T_\nu} \bm{u} = 0$ in $B\cap \partial \mathcal{K}$. Then
  \begin{equation}\label{corner integration by parts}
    \int_{\mathcal{K}\cap B} \bm{v} \cdot \bm{\mathcal L}\bm{u}\, dx =
    \int_{\mathcal{K}\cap\partial B} [(\bm{T_\nu u})\cdot\bm{v} - (\bm{T_\nu
        v})\cdot\bm{u}] dS.
  \end{equation}
\end{lemma}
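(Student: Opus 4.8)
The plan is to excise the troublesome vertex with a small disc and then pass to the limit. For $\delta\in(0,h)$ set $D_\delta=\mathcal{K}\cap\big(B\setminus\overline{B(0,\delta)}\big)$. Since $-\pi<\theta_m<\theta_M<\pi$, the closed annular sector $\overline{D_\delta}$ stays away from both the origin and the negative real axis, so by the construction in \cref{vDef} the field $\bm{v}$ is smooth up to $\overline{D_\delta}$ and satisfies $\bm{\mathcal L}\bm{v}=0$ there. As $\bm{u}\in H^2(D_\delta)$, the integration-by-parts identity behind \eqref{easyIntId} applies verbatim on $D_\delta$ and, using $\bm{\mathcal L}\bm{v}=0$, gives
\begin{equation*}
  \int_{D_\delta}\bm{v}\cdot\bm{\mathcal L}\bm{u}\,dx=\int_{\partial D_\delta}\big[(\bm{T_\nu u})\cdot\bm{v}-(\bm{T_\nu v})\cdot\bm{u}\big]\,dS.
\end{equation*}
The boundary $\partial D_\delta$ splits into the outer arc $\mathcal{K}\cap\partial B$, the inner arc $\mathcal{K}\cap\partial B(0,\delta)$, and the two radial segments lying in $\partial\mathcal{K}$. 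On the radial segments $\bm{u}=\bm{T_\nu u}=0$ by hypothesis, so both boundary terms there vanish; the outer arc reproduces the right-hand side of \eqref{corner integration by parts}. It therefore remains to show that the inner-arc contribution tends to $0$ as $\delta\to0$.

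First I would dispose of the left-hand side and the easier inner term. Because $\abs{\bm{v}}\le\sqrt2$ everywhere (each component has modulus $\exp(-s\,\mathrm{Re}\sqrt z)\le1$, as $\mathrm{Re}\sqrt z=\abs{z}^{1/2}\cos(\theta/2)\ge0$ on $\abs{\theta}\le\pi$) and $\bm{\mathcal L}\bm{u}\in L^2(\mathcal{K}\cap B)\subset L^1$, dominated convergence gives $\int_{D_\delta}\bm{v}\cdot\bm{\mathcal L}\bm{u}\,dx\to\int_{\mathcal{K}\cap B}\bm{v}\cdot\bm{\mathcal L}\bm{u}\,dx$. For the term $\int_{\mathcal{K}\cap\partial B(0,\delta)}(\bm{T_\nu v})\cdot\bm{u}\,dS$, a direct differentiation of $\exp(-s\sqrt z)$ shows $\abs{\nabla\bm{v}}\lesssim\abs{z}^{-1/2}$, hence $\abs{\bm{T_\nu v}}\lesssim\delta^{-1/2}$ on the inner arc. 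Since $\bm{u}\in H^2\hookrightarrow C^0$ in two dimensions and $\bm{u}=0$ on the two rays accumulating at the origin, $\bm{u}$ is continuous up to the vertex with $\bm{u}(0)=0$, so $\sup_{\abs{x}=\delta}\abs{\bm{u}}\to0$. Combined with the arc length $\lesssim\delta$, this term is of size $\delta^{-1/2}\cdot\delta\cdot o(1)\to0$.

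The main obstacle is the remaining term $\int_{\mathcal{K}\cap\partial B(0,\delta)}(\bm{T_\nu u})\cdot\bm{v}\,dS$: here $\bm{v}$ is only bounded, not small, and $\nabla\bm{u}$ need not even be continuous at the corner (in two dimensions $H^1$ does not embed into $C^0$), so one cannot expect the circle integral $\int_{\mathcal{K}\cap\partial B(0,\delta)}\abs{\nabla\bm{u}}\,dS$ to vanish for every $\delta$. The remedy is a coarea/Cauchy--Schwarz argument that produces a good sequence of radii. Writing the energy in polar form, $\int_0^h\big(\int_{\mathcal{K}\cap\partial B(0,r)}\abs{\nabla\bm{u}}^2\,dS\big)\,dr=\norm{\nabla\bm{u}}_{L^2(\mathcal{K}\cap B)}^2<\infty$, which forces $\liminf_{r\to0}\,r\int_{\mathcal{K}\cap\partial B(0,r)}\abs{\nabla\bm{u}}^2\,dS=0$ (otherwise the integrand would dominate a nonintegrable $c/r$). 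By Cauchy--Schwarz, $\int_{\mathcal{K}\cap\partial B(0,\delta)}\abs{\nabla\bm{u}}\,dS\le(C\delta)^{1/2}\big(\int_{\mathcal{K}\cap\partial B(0,\delta)}\abs{\nabla\bm{u}}^2\,dS\big)^{1/2}$, so along a suitable sequence $\delta_k\downarrow0$ this term tends to $0$.

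Finally I would assemble the pieces. Since the left-hand side, the outer-arc integral, and the $(\bm{T_\nu v})\cdot\bm{u}$ term each converge as $\delta\to0$ (not merely along a subsequence), the quantity $\int_{\mathcal{K}\cap\partial B(0,\delta)}(\bm{T_\nu u})\cdot\bm{v}\,dS$ has a genuine limit; its value along $\delta_k$ then pins that limit to $0$. Passing to the limit in the displayed Betti identity yields exactly \eqref{corner integration by parts}. The only delicate point is the one isolated above, namely that the normal-traction-of-$\bm{u}$ term on the shrinking arc is controlled solely through the $L^2$ bound on $\nabla\bm{u}$ via the coarea trick, whereas the conjugate term is handled by the boundedness of $\bm{v}$ together with the vanishing of $\bm{u}$ at the vertex.
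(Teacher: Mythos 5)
Your proof is correct, and it diverges from the paper's at exactly the step you flag as delicate. Both proofs excise $B(0,\varepsilon)$, apply the Betti identity \eqref{easyIntId} on the annular sector, kill the radial segments using $\bm{u}=\bm{T_\nu u}=0$, and handle the $(\bm{T_\nu v})\cdot\bm{u}$ inner-arc term via $\abs{\bm{T_\nu v}}\lesssim s\mu\,\varepsilon^{-1/2}$ (the paper computes $\bm{T_\nu v}=s\mu\frac{\sqrt z}{\abs z}\bm v$ exactly, exploiting $v_2=iv_1$) together with the 2D embedding $H^2\hookrightarrow C^0$; your extra input $\bm u(0)=0$ is true but unnecessary, since uniform boundedness of $\bm u$ already gives $O(\varepsilon^{1/2})$. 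The genuine difference is the $(\bm{T_\nu u})\cdot\bm v$ term. The paper gets a full-limit, quantitative bound: after Cauchy--Schwarz it applies the trace theorem to the rescaled function $\bm g_\varepsilon(y)=\bm{T_\nu u}(\varepsilon y)$, obtaining $\norm{\bm{T_\nu u}}_{L^2(\mathcal{K}\cap S(0,\varepsilon))}\leq C\varepsilon^{-1/2}\norm{\bm u}_{H^2(\mathcal{K}\cap B(0,\varepsilon))}$, so the arc integral is bounded by $C\sqrt{\theta_M-\theta_m}\,\norm{\bm u}_{H^2(\mathcal{K}\cap B(0,\varepsilon))}\to0$ by absolute continuity of the $H^2$-norm, for \emph{every} $\varepsilon\to0$. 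You instead use the polar-coordinates/coarea averaging trick, which needs only $\nabla\bm u\in L^2$ (not the full $H^2$ regularity, for this term) and yields smallness only along a subsequence of radii; you then correctly upgrade to the full limit by observing that the Betti identity expresses this term through quantities that all converge, so its limit exists and the subsequence pins it to zero. Each route buys something: the paper's scaling argument is direct and uniform in $\varepsilon$ (and is the template reused in the estimate \eqref{Bestimate} of \cref{2Dprop}), while yours is more elementary and uses weaker regularity at the key step, at the cost of the indirect assembly argument and one unstated technicality --- the a.e.-radius identification of the pointwise restriction of $\nabla\bm u$ to $\mathcal{K}\cap\partial B(0,r)$ with its trace, which you should note when selecting the good radii $\delta_k$.
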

\begin{proof}
  Let $\mathcal{K}_\varepsilon := (\mathcal{K}\cap B) \setminus B(0,\varepsilon)$ for
  $0<\varepsilon<h$. Note that $\abs{\bm{v}}$ is bounded and
  $\abs{\bm{\mathcal Lu}}\in L^2$, and that $\bm{v}$ is smooth and
  $\bm{\mathcal L}\bm{v}=0$ in the closure of $\mathcal{K}_\varepsilon$. Hence we have
  \begin{align}\label{int by part 1}
  \int_{\mathcal{K}\cap B} \bm{v} \cdot \bm{\mathcal L u}\, dx =
  \lim_{\varepsilon\to0} \int_{\mathcal K_\varepsilon} \bm{v} \cdot
  \bm{\mathcal L u}\, dx = \lim_{\varepsilon\to 0} \int_{\d
  	\mathcal{K}_\varepsilon} [(\bm{T}_{\bm{\nu}}\bm{u})\cdot\bm{v}-
  (\bm{T}_{\bm{\nu}}\bm{v})\cdot\bm{u}] dS
  \end{align}
  by \cref{easyIntId}. Recall that $\bm{u} = \bm{T_\nu u} = 0$ on
  $B\cap\d \mathcal{K}$. Thus the boundary integral of \cref{int by part 1} is equal to
  \begin{align}\label{int by part 2}
  \int_{\mathcal{K} \cap \d B} [(\bm{T}_{\bm{\nu}}\bm{u})\cdot\bm{v}-
  (\bm{T}_{\bm{\nu}}\bm{v})\cdot\bm{u}] dS + \int_{\mathcal{K} \cap
  	S(0,\varepsilon)} [(\bm{T}_{\bm{\nu}}\bm{u})\cdot\bm{v}-
  (\bm{T}_{\bm{\nu}}\bm{v})\cdot\bm{u}] dS,
  \end{align}
  where $S(0,\epsilon)=\p B(0,\epsilon)$, and so our task is to show that the last integral in \cref{int by part 2} vanishes as $\varepsilon\to0$.

  Notice that the normal derivative is $\bm{\nu}(x) =
  -x/\abs{x}$ on  $S(0,\varepsilon)$. Recall also that $\d_1 = \d+\db$ and $\d_2 =
  i(\d-\db)$, and moreover that $\bm{v}$ is complex analytic in a
  neighbourhood of $S(0,\varepsilon)$. Hence $\d_1\bm{v} = \d\bm{v}$
  and $\d_2\bm{v} = i\d\bm{v}$. Concluding, the boundary traction
  becomes
  \begin{align*}
    \bm{T_\nu v} &= 2\mu (\nu_1 \d_1 + \nu_2 \d_2) \bm{v} + \lambda
    \bm{\nu} (\d_1 v_1 + \d_2 v_2) + \mu \bm{\nu}^\perp (\d_2 v_1 -
    \d_1 v_2) \\ &= 2\mu (\nu_1 + i\nu_2) \d \bm{v} + \lambda \bm{\nu}
    (\d v_1 + i \d v_2) + \mu \bm{\nu}^\perp (i\d v_1 - \d v_2),
  \end{align*}
  however note that $v_2 = i v_1$ so the divergence and curl terms
  vanish. Moreover $\nu_1+i\nu_2 = -z/\abs{z}$ with the identification
  $z = x_1+ix_2$. Hence
  \begin{equation} \label{vTraction}
    \bm{T_\nu v} = - 2\mu \frac{z}{\abs{z}} \d \bm{v} = s \mu
    \frac{\sqrt{z}}{\abs{z}} \bm{v}
  \end{equation}
  because $\partial \exp(-s\sqrt{z}) = \frac{-s \exp(-s\sqrt{z})}{2\sqrt{z}}$ outside of the branch cut.

  We have $\abs{\bm{v}(x)} = \sqrt{2} \exp(-s\sqrt{\abs{x}}\cos
  \theta/2) \leq\sqrt{2}$ where $\theta$ is the argument of
  $x_1+ix_2$. By \cref{vTraction}
  \[
  \abs{\bm{T_\nu v}} = \abs{s \mu \frac{\sqrt{z}}{\abs{z}} \bm{v}} = s
  \mu \varepsilon^{-1/2} \abs{\bm{v}} \leq \sqrt{2} s \mu
  \varepsilon^{-1/2}
  \]
  on $S(0,\varepsilon)$. Note also that $H^2(\mathcal{K}\cap B)$-functions embed
  continuously into uniformly bounded and continuous functions in two
  dimensions. Hence
  \[
  \abs{\bm{u}(x)} \leq C \norm{\bm{u}}_{H^2(\mathcal{K}\cap B)}
  \]
  for $x\in \mathcal{K}\cap B$ and in particular for $x\in
  S(0,\varepsilon)$. This shows that
  \[
  \abs{\int_{\mathcal{K}\cap S(0,\varepsilon)} (\bm{T_\nu v})\cdot \bm{u}(x) dS
  } \leq C \norm{\bm{u}}_{H^2(\mathcal{K}\cap B)} \sqrt{2} s \varepsilon^{-1/2}
  (\theta_M-\theta_m)\varepsilon \to 0
  \]
  as $\varepsilon\to0$.

  For the remaining term, we can estimate $\abs{\bm{v}}\leq\sqrt{2}$
  and use the Cauchy--Schwarz inequality to get
  \begin{equation}\label{sphereIntegral}
    \abs{\int_{\mathcal{K}\cap S(0,\varepsilon)} (\bm{T_\nu u})\cdot\bm{v} dS }
    \leq \sqrt{(\theta_M-\theta_m)\varepsilon} \norm{ \bm{T_\nu u}
    }_{L^2(\mathcal{K}\cap S(0,\varepsilon))} \sqrt{2}.
  \end{equation}
  Denote $\bm{g}(x) = \bm{T_\nu u}(x)$. Then $\bm{g}\in H^1(K\cap B)$
  and $\norm{\bm{g}}_{H^1} \leq C(\lambda+\mu) \norm{\bm{u}}_{H^2}$ in
  any given open set. Let $\bm{g}_\varepsilon(y) = \bm{g}(\varepsilon
  y)$. Then $dS(x) = \varepsilon dS(y)$ so
  \[
  \norm{\bm{g}}_{L^2(\mathcal{K}\cap S(0,\varepsilon))} = \sqrt{\varepsilon}
  \norm{\bm{g}_\varepsilon}_{L^2(\mathcal{K}\cap S(0,1))} \leq C
  \sqrt{\varepsilon} \norm{\bm{g}_\varepsilon}_{H^1(\mathcal{K}\cap B(0,1))}
  \]
  by the trace theorem, and $C>0$ is independent of $\varepsilon$.
  However $dx = \varepsilon^2 dy$, and $\d_{y_j} \bm{g}_\varepsilon(y)
  = \d_{y_j} (\bm{g}(\varepsilon y)) = \varepsilon (\d_j \bm{g})
  (\varepsilon y)$ so
  \begin{align*}
    &\norm{\bm{g}_\varepsilon}_{L^2(\mathcal{K}\cap B(0,1))} = \varepsilon^{-1}
    \norm{\bm{g}}_{L^2(\mathcal{K}\cap B(0,\varepsilon))}, \\ &\norm{\d_j
      \bm{g}_\varepsilon}_{L^2(\mathcal{K}\cap B(0,1))} = \norm{\d_j
      \bm{g}}_{L^2(\mathcal{K}\cap B(0,\varepsilon))},
  \end{align*}
  in other words $\norm{\bm{g}_\varepsilon}_{H^1(K\cap B(0,1))} \leq
  \varepsilon^{-1} \norm{\bm{g}}_{H^1(K\cap B(0,\varepsilon))}$. This
  implies
  \[
  \norm{\bm{g}}_{L^2(\mathcal{K}\cap S(0,\varepsilon))} \leq C
  \varepsilon^{-1/2} \norm{\bm{g}}_{H^1(\mathcal{K}\cap B(0,\varepsilon))} \leq
  C (\lambda+\mu) \varepsilon^{-1/2} \norm{\bm{u}}_{H^2(\mathcal{K}\cap
    B(0,\varepsilon))}
  \]
  where $C$ is independent of $\varepsilon$. Combining these with
  \eqref{sphereIntegral} gives
  \[
    \abs{\int_{\mathcal{K}\cap S(0,\varepsilon)} (\bm{T_\nu u})\cdot\bm{v} dS } \leq C (\lambda+\mu) \sqrt{\theta_M-\theta_m}
  \norm{\bm{u}}_{H^2(\mathcal{K}\cap B(0,\varepsilon))}
  \]
  which tends to zero when $\varepsilon\to0$ because
  $\norm{\bm{u}}_{H^2(K\cap B)}$ is finite. This proves \cref{corner integration by parts}.
\end{proof}

\section{Corner scattering}\label{Section 3}

\begin{proposition}\label{2DupperAndLower}
  Let $\bm{v}:\R^2\to\C$ be the function given in \cref{vDef} and define the open
  sector
  \begin{equation*} 
    \mathcal{K} = \{x\in\R^2 \mid x\neq0,\, \theta_m < \arg(x_1+ix_2) < \theta_M
    \}
  \end{equation*}
  for angles satisfying $-\pi < \theta_m < \theta_M < \pi$. Then
  \begin{equation*}
    \int_\mathcal{K} v_1(x) dx = 6i ( e^{{-2}{\theta_M}i} - e^{{-2}{\theta_m}i}
    ) s^{-4}.
  \end{equation*}
  In addition for $\alpha,h>0$ and $j\in\{1,2\}$ we have the upper
  bounds
  \begin{equation*}
    \int_\mathcal{K} \abs{v_j(x)} \abs{x}^\alpha dx \leq
    \frac{2(\theta_M-\theta_m)
      \Gamma(2\alpha+4)}{\delta_\mathcal{K}^{2\alpha+4}} s^{-2\alpha-4}
  \end{equation*}
  and
  \begin{equation*}
    \int_{\mathcal{K} \setminus B(0,h)} \abs{v_j(x)} dx \leq
    \frac{6(\theta_M-\theta_m)}{\delta_\mathcal{K}^4} s^{-4} e^{-\delta_\mathcal{K} s
      \sqrt{h}/2}.
  \end{equation*}
  where $\delta_\mathcal{K} = \min_{\theta_m<\theta<\theta_M} \cos(\theta/2)$ is a positive constant.
\end{proposition}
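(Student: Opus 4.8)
The plan is to pass to polar coordinates $z=re^{i\theta}$, where $dx = r\,dr\,d\theta$, and to exploit that $\abs{v_j(x)} = \exp(-s\sqrt{r}\cos(\theta/2))$ for both $j$, since multiplication by $i$ leaves the modulus unchanged. The decisive simplification is the radial substitution $\rho=\sqrt{r}$, under which $r\,dr = 2\rho^3\,d\rho$ and every radial integral becomes a Gamma integral. Concretely I would rely throughout on the master formula $\int_0^\infty e^{-a\rho}\rho^k\,d\rho = \Gamma(k+1)\,a^{-(k+1)}$, valid for any complex $a$ with $\mathrm{Re}\,a>0$ and any $k>-1$. The only place $a$ is genuinely complex is the exact computation, and there the formula is justified either by repeated integration by parts or by analytic continuation from real $a$, both sides being holomorphic on the open right half-plane.

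For the exact value I would write $v_1 = \exp(-s\sqrt{r}\,e^{i\theta/2})$ and apply the master formula with $a = s\,e^{i\theta/2}$ and $k=3$. Since $-\pi<\theta_m<\theta<\theta_M<\pi$ forces $\cos(\theta/2)>0$, one has $\mathrm{Re}\,a>0$, and the inner integral equals $2\cdot 6\,a^{-4} = 12\,s^{-4}e^{-2i\theta}$. Integrating this over $\theta\in(\theta_m,\theta_M)$ is elementary and yields $\frac{12}{s^4}\cdot\frac{i}{2}\bigl(e^{-2i\theta_M}-e^{-2i\theta_m}\bigr)$, which is the claimed identity. The interchange of the $r$- and $\theta$-integrations is justified by absolute integrability on $\mathcal{K}$, as the next estimate shows.

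For the two upper bounds I would first record the pointwise inequality $\cos(\theta/2)\ge\delta_\mathcal{K}$ on $(\theta_m,\theta_M)$, so that $\abs{v_j(x)}\le e^{-s\delta_\mathcal{K}\sqrt{r}}$. The first bound then follows by inserting $\abs{x}^\alpha = r^\alpha$, factoring out the trivial angular contribution $(\theta_M-\theta_m)$, and applying the master formula with $a=s\delta_\mathcal{K}$ and $k=2\alpha+3$, giving $2\,\Gamma(2\alpha+4)\,(s\delta_\mathcal{K})^{-(2\alpha+4)}$. For the decay estimate on $\mathcal{K}\setminus B(0,h)$ the idea is to split $e^{-s\delta_\mathcal{K}\sqrt{r}} = e^{-s\delta_\mathcal{K}\sqrt{r}/2}\,e^{-s\delta_\mathcal{K}\sqrt{r}/2}$ and bound the first factor by its value $e^{-s\delta_\mathcal{K}\sqrt{h}/2}$ on the region $r\ge h$, pulling that constant out of the integral; the remaining integral is then enlarged back to all of $\mathcal{K}$ and evaluated by the master formula. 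I expect the main obstacle to be the bookkeeping of the constant in precisely this last step: extracting the decay factor costs a factor of two in the exponent, and one must check carefully how the surviving Gamma integral collapses, so I would keep the halved exponent explicit until the very end rather than bounding crudely.
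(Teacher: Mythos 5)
Your strategy is sound, and for the first two claims your constants come out exactly right. Passing to polar coordinates with $\rho=\sqrt r$, $r\,dr=2\rho^3\,d\rho$, and using $\int_0^\infty e^{-a\rho}\rho^3\,d\rho=6a^{-4}$ with $a=se^{i\theta/2}$ (valid since $\cos(\theta/2)>0$ for $\theta\in(\theta_m,\theta_M)\subset(-\pi,\pi)$, and for complex $a$ by analytic continuation or integration by parts, as you say) gives the inner integral $12s^{-4}e^{-2i\theta}$, and $\int_{\theta_m}^{\theta_M}e^{-2i\theta}d\theta=\frac{i}{2}(e^{-2i\theta_M}-e^{-2i\theta_m})$ yields precisely the stated identity; the $\abs{x}^\alpha$ bound likewise reproduces $2(\theta_M-\theta_m)\Gamma(2\alpha+4)\delta_\mathcal{K}^{-(2\alpha+4)}s^{-2\alpha-4}$ on the nose. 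Be aware, though, that the paper itself does not prove this proposition: its ``proof'' is a citation to \cite[Section~2]{Bsource}, with the remark that here the parameter $s$ sits outside the square root. Your blind derivation is therefore a self-contained substitute for the citation, and it is essentially the cited computation after the reparametrization.

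The genuine issue is the tail bound, and you correctly flagged it as the bookkeeping step but left it unresolved. Your splitting $e^{-s\delta_\mathcal{K}\sqrt r}\le e^{-s\delta_\mathcal{K}\sqrt h/2}\,e^{-s\delta_\mathcal{K}\sqrt r/2}$ on $r\ge h$, followed by enlarging the surviving integral to all of $\mathcal{K}$, gives the radial factor $2\Gamma(4)(s\delta_\mathcal{K}/2)^{-4}=192(s\delta_\mathcal{K})^{-4}$, hence the constant $192$, not $6$ --- and no amount of care recovers $6$, because the statement's constant is not attainable. Indeed, with $t=\delta_\mathcal{K} s\sqrt h$ the exact radial integral is $2e^{-t}(t^3+3t^2+6t+6)(s\delta_\mathcal{K})^{-4}$, so the claimed bound would require $e^{-t/2}(t^3+3t^2+6t+6)\le 3$, which fails for small $t$ (the left side equals $6$ at $t=0$, and its maximum is about $19.5$, so the honest constant by this sharper route is about $39$). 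One can even contradict the proposition directly: letting $h\to0$, the left-hand side tends to $\int_{\theta_m}^{\theta_M}12(s\cos(\theta/2))^{-4}d\theta\ge 12(\theta_M-\theta_m)s^{-4}$, which exceeds the right-hand side's limit $6(\theta_M-\theta_m)\delta_\mathcal{K}^{-4}s^{-4}$ whenever $\delta_\mathcal{K}^4>1/2$, e.g. for a narrow sector around the positive real axis. So the mismatch is a defect of the stated constant (inherited with the citation), not of your argument: the inequality holds with $6$ replaced by $192$ (or restricted to $\delta_\mathcal{K} s\sqrt h$ large), and this is all that is ever used --- in \cref{2Dprop} only a bound of the form $C'_\mathcal{K}\,s^{-4}e^{-\delta_\mathcal{K} s\sqrt h/2}$ with an unspecified constant enters. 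Keep the halved exponent explicit as you planned, and simply accept the larger constant.
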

\begin{proof}
  This result is in \cite[Section 2]{Bsource}, however note that the
  parameter $s$ is outside of the square root in this article.
\end{proof}

\begin{proposition} \label{2Dprop}
  Let $\Omega\subset\R^2$ be a bounded domain and define the cone
  \begin{align}\label{cone}
  \mathcal{K} = \{x\in\R^2 \mid x\neq0,\, \theta_m < \arg(x_1+ix_2) < \theta_M
  \}
  \end{align}
  with angles $-\pi<\theta_m<\theta_M<\pi$ where $\theta_M \neq
  \theta_m + \pi$. Assume that $0\in\partial\Omega$ is the centre of a
  ball $B$ for which $\Omega\cap B=\mathcal{K}\cap B$.

  Given $\alpha\in (0,1)$ and  $\bm{f}\in
  C^\alpha(\overline{\Omega\cap B})$, let $\bm{u}\in H^2(\Omega\cap B)$ solve
  \[
  \lambda \Delta \bm{u} + (\lambda+\mu)\nabla \nabla\cdot
  \bm{u}+\omega^2 \bm{u} = \bm{f} \text{ in }\Omega\cap B,
  \]
  for some fixed $\omega>0$. If $\bm{u}=0$ and
  $\bm{T_\nu u} = 0$ on $\partial\Omega\cap B$ then $\bm{f}(0) = 0$.
\end{proposition}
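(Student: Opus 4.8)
The plan is to apply the enclosure method to the elastic operator using the exponential solution of \cref{vDef} together with its complex conjugate. Since $\Omega\cap B=\mathcal K\cap B$, the hypotheses $\bm u=\bm{T_\nu u}=0$ on $\partial\Omega\cap B$ are precisely those of \cref{vIntId}. Inserting $\bm{\mathcal L}\bm u=\bm f-\omega^2\bm u$ into \eqref{corner integration by parts} I would obtain
\begin{equation*}
  \int_{\mathcal K\cap B}\bm v\cdot\bm f\,dx-\omega^2\int_{\mathcal K\cap B}\bm v\cdot\bm u\,dx=\int_{\mathcal K\cap\partial B}\left[(\bm{T_\nu u})\cdot\bm v-(\bm{T_\nu v})\cdot\bm u\right]dS.
\end{equation*}
The right-hand side is an integral over the fixed arc $\mathcal K\cap\partial B$, where $\abs x=h$ stays away from the corner; there $\abs{\bm v}\le\sqrt2\,e^{-\delta_{\mathcal K}s\sqrt h}$ and $\abs{\bm{T_\nu v}}\lesssim s\,e^{-\delta_{\mathcal K}s\sqrt h}$, while $\bm u$ and $\bm{T_\nu u}$ are bounded on $\partial B$ by $\norm{\bm u}_{H^2}$ via the trace theorem. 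Hence the whole boundary term is $O\!\big((1+s)e^{-\delta_{\mathcal K}s\sqrt h}\big)$, exponentially small as $s\to\infty$.

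Next I would analyse the two integrals on the left. For the source I write $\bm f(x)=\bm f(0)+(\bm f(x)-\bm f(0))$; using $\bm v=(v_1,iv_1)$ the constant part equals $\bm f(0)\cdot\int_{\mathcal K\cap B}\bm v\,dx=(f_1(0)+if_2(0))\int_{\mathcal K\cap B}v_1\,dx$. By \cref{2DupperAndLower}, $\int_{\mathcal K}v_1\,dx=6i(e^{-2\theta_M i}-e^{-2\theta_m i})s^{-4}$ and the tail $\int_{\mathcal K\setminus B}\abs{v_1}\,dx$ is exponentially small, so this contribution is $6i(f_1(0)+if_2(0))(e^{-2\theta_M i}-e^{-2\theta_m i})s^{-4}+o(s^{-4})$. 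The H\"older remainder is controlled by $\abs{\bm f(x)-\bm f(0)}\le[\bm f]_{C^\alpha}\abs x^\alpha$ and the moment estimate of \cref{2DupperAndLower}, and is therefore $O(s^{-2\alpha-4})=o(s^{-4})$.

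I expect the main technical step to be showing that the lower-order term $\omega^2\int_{\mathcal K\cap B}\bm v\cdot\bm u\,dx$ is genuinely $o(s^{-4})$, since the crude bound from boundedness of $\bm u$ only gives $O(s^{-4})$, the same order as the leading source term. To gain the needed decay I would use the two-dimensional Sobolev embedding $H^2(\Omega\cap B)\hookrightarrow C^{0,\gamma}(\overline{\Omega\cap B})$ for some $\gamma\in(0,1)$ together with the fact that $\bm u=0$ on the two edges forces $\bm u(0)=0$ by continuity up to the Lipschitz boundary; thus $\abs{\bm u(x)}\le C\abs x^\gamma$ near the corner, and the moment estimate of \cref{2DupperAndLower} gives $\abs{\int_{\mathcal K\cap B}\bm v\cdot\bm u\,dx}\le C\int_{\mathcal K}\abs{\bm v}\abs x^\gamma\,dx=O(s^{-2\gamma-4})=o(s^{-4})$. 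Multiplying the identity by $s^4$ and sending $s\to\infty$ then annihilates every term except the leading source part, leaving
\begin{equation*}
  6i\,(f_1(0)+if_2(0))\,(e^{-2\theta_M i}-e^{-2\theta_m i})=0.
\end{equation*}
Because $-\pi<\theta_m<\theta_M<\pi$ forces $\theta_M-\theta_m\in(0,2\pi)$, the factor $e^{-2\theta_M i}-e^{-2\theta_m i}$ vanishes exactly when $\theta_M-\theta_m=\pi$, which is excluded; hence $f_1(0)+if_2(0)=0$.

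This single complex equation fixes only one linear combination of the components of $\bm f(0)$, which is the essential difference from the scalar Helmholtz case of \cite{Bsource}. To close the argument I would repeat everything with the conjugate solution $\overline{\bm v}=(\overline{v_1},-i\,\overline{v_1})$: as $\bm{\mathcal L}$ has real coefficients we have $\bm{\mathcal L}\overline{\bm v}=0$, the estimates in \cref{vIntId} and \cref{2DupperAndLower} depend only on $\abs{\bm v}$ and so hold verbatim, and the leading coefficient is now the conjugate $\overline{6i(e^{-2\theta_M i}-e^{-2\theta_m i})}\ne0$. This yields $f_1(0)-if_2(0)=0$, and combining the two relations gives $f_1(0)=f_2(0)=0$, i.e. $\bm f(0)=0$.
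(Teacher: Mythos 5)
Your proof is correct, and its core machinery is exactly the paper's: the exponential solution of \cref{vDef}, the integration-by-parts formula of \cref{vIntId} on the truncated cone, the decomposition of $\int_{\mathcal K\cap B}\bm v\cdot(\bm f-\omega^2\bm u)\,dx$ into a leading full-cone term of size $s^{-4}$, an exponentially small tail, and an $O(s^{-2\alpha-4})$ remainder, with the boundary term on $\mathcal K\cap\partial B$ exponentially small. Where you genuinely diverge from the paper is in the final step. You observe correctly that one test function yields only the single complex relation $f_1(0)+if_2(0)=0$, and you close the gap by rerunning the argument with the anti-analytic conjugate $\overline{\bm v}=(\overline{v_1},-i\,\overline{v_1})$, which also satisfies $\bm{\mathcal L}\overline{\bm v}=0$ since $\bm{\mathcal L}$ has real coefficients, to obtain $f_1(0)-if_2(0)=0$. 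The paper instead dispatches this issue at the outset: it splits $\bm f$ and $\bm u$ into real and imaginary parts, each pair satisfying the same equation with the same vanishing Cauchy data, so it may assume $\bm f$ real-valued; then $f_1(0)+if_2(0)=0$ with real $f_1(0),f_2(0)$ already forces both to vanish, and a single test function suffices. Your route costs a second pass (and, strictly, a re-verification that \cref{vIntId} holds for $\overline{\bm v}$ --- though this is immediate, e.g.\ by applying \cref{vIntId} to $\overline{\bm u}$ and conjugating the resulting identity); what it buys is a direct proof for complex-valued sources exhibiting that the two-parameter family $\{\bm v,\overline{\bm v}\}$ spans the needed dual directions, whereas the paper's reduction is shorter. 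Your handling of the lower-order term $\omega^2\int\bm v\cdot\bm u\,dx$ (via $\bm u(0)=0$ from the boundary condition, $H^2\hookrightarrow C^{0,\gamma}$, and the moment bound of \cref{2DupperAndLower}, giving $O(s^{-2\gamma-4})$) is mathematically the same as the paper's device of absorbing $-\omega^2\bm u$ into $\widetilde{\bm f}=\bm f-\omega^2\bm u$ with $\widetilde{\bm f}(0)=\bm f(0)$; you correctly identified that the crude $O(s^{-4})$ bound is insufficient here.

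One small imprecision, not a gap: you say $\bm{T_\nu u}$ is ``bounded'' on $\partial B$ via the trace theorem. Since $\bm{T_\nu u}$ involves first derivatives of an $H^2$ function, its trace on the arc lies only in $H^{1/2}\subset L^2$, not in $L^\infty$ (in 2D, $H^1\not\hookrightarrow L^\infty$). The conclusion survives because $\abs{\bm v}\le\sqrt2\,e^{-\delta_{\mathcal K}s\sqrt h}$ uniformly on the arc, so Cauchy--Schwarz with the $L^2$ trace bound still gives the $O\left((1+s)e^{-\delta_{\mathcal K}s\sqrt h}\right)$ estimate, exactly as in the paper's \cref{Bestimate}; you should phrase it that way.
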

\begin{proof}
  Write the equation as $\bm{\mathcal L u} + \omega^2 \bm{u}= \bm{f}$
  where $\bm{\mathcal L}$ is as in \cref{sourceElasticity}. We may
  assume that $\bm{f}$ is real-valued. If not, then split it and
  $\bm{u}$ into their real and imaginary parts $\bm{f}_R, \bm{f}_I,
  \bm{u}_R, \bm{u}_I$ which would then satisfy
  \begin{align*}
    &\bm{\mathcal L} \bm{u}_R + \omega^2 \bm{u}_R = \bm{f}_R,\\
    &\bm{\mathcal L} \bm{u}_I + \omega^2 \bm{u}_I = \bm{f}_I,
  \end{align*}
  with $\bm{u}_R = 0 = \bm{u}_I$, $\bm{T_\nu} \bm{u}_R = 0 =
  \bm{T_\nu} \bm{u}_I$ on $\d\Omega\cap B$. Then once we show that
  $\bm{f}_R(0) = \bm{f}_I(0) = 0$ this would imply the original claim
  of $\bm{f}(0)=0$.

  Let $\bm{v}$ be as in \cref{vDef} and so $\bm{\mathcal L v} = 0$ in
  $\Omega\cap B$. Let $\widetilde{\bm{f}} = \bm{f} - \omega^2 \bm{u}$ and
  keep in mind that $\widetilde{\bm{f}}(0) = \bm{f}(0)$ because $\bm{u}=0$
  on $\d\Omega\cap B$. \cref{vIntId} implies that
  \begin{equation} \label{intId}
    \int_{\mathcal{K}\cap B} \bm{v} \cdot \widetilde{\bm{f}} dx = \int_{\mathcal{K}\cap\d B} [
      (\bm{T_\nu u})\cdot\bm{v} - (\bm{T_\nu v})\cdot\bm{u} ] dS
  \end{equation}
  and moreover the left-hand side of \cref{intId} can be split as
  \begin{equation} \label{threeTerms}
    \int_{\mathcal{K}\cap B} \bm{v} \cdot \widetilde{\bm{f}} dx = \int_\mathcal{K} \bm{v}\cdot\bm{f}(0) dx - \int_{\mathcal{K}\setminus B}
    \bm{v}\cdot\bm{f}(0) dx + \int_{\mathcal{K}\cap B} \bm{v} \cdot \left(
    \widetilde{\bm{f}}-\widetilde{\bm{f}}(0) \right) dx
  \end{equation}
  because $\widetilde{\bm{f}}(0) = \bm{f}(0)$.

  As in the proof of \cref{vIntId}, we have $\bm{T_\nu v} = -s\mu\left(\frac{\sqrt{z}}{\abs{z}}\right) \bm{v}$ on $\mathcal{K}\cap\d B$ and so $\abs{\bm{T_\nu v}} =
  s \mu h^{-1/2} \sqrt{2} \exp(-s\sqrt{h} \cos(\theta/2))$ on $\mathcal{K}\cap \p B$ where
  $h$ is the radius of $B$ and $\theta$ the argument of
  $x_1+ix_2$. Let $\delta_\mathcal{K} := \min_{\theta_m<\theta<\theta_M}
  \cos(\theta/2)$ and it is positive. Hence
  \begin{equation} \label{TvBestimate}
    \abs{\bm{T_\nu v}(x)} \leq C_{\mathcal{K},B,\mu} s \exp(- \delta_\mathcal{K} s
    \sqrt{h})
  \end{equation}
  for some positive constant $C_{\mathcal{K},B,\mu}$ (depending on $\mathcal{K}$, $B$ and $\mu$), when $x\in \mathcal{K}\cap\d B$. By a similar consideration
  \begin{equation} \label{vBestimate}
    \abs{\bm{v}(x)} \leq \sqrt{2} \exp(-\delta_\mathcal{K} s \sqrt{h})
  \end{equation}
  on the same part of the boundary. Thus, \cref{TvBestimate}, \cref{vBestimate}, the Cauchy--Schwarz and trace inequalities give that 
  \begin{align} \label{Bestimate}
    &\abs{\int_{\mathcal{K}\cap\d B} [ (\bm{T_\nu u})\cdot\bm{v} - (\bm{T_\nu
          v})\cdot\bm{u} ] dS} \notag\\ &\qquad \leq
    C_{\mathcal{K},B,\mu,\lambda} \norm{\bm{u}}_{H^2(\mathcal{K}\cap B)} (1+s)
    \exp(-\delta_\mathcal{K} s \sqrt{h})
  \end{align}
  which decays exponentially as $s\to\infty$.

  Let us estimate the three terms in \cref{threeTerms} next. But
  before that, we recall $\widetilde{\bm{f}} = \bm{f} - \omega^2 \bm{u}$,
  and that $\bm{u}\in H^2$ which embeds continuously into $C^\alpha$
  in two dimensions. Hence $\widetilde{\bm{f}} \in
  C^\alpha(\overline{K\cap B})$. Then \cref{2DupperAndLower} gives
  \begin{align*}
    &\int_\mathcal{K} \bm{v}\cdot\bm{f}(0) dx = (f_1(0)+if_2(0)) \int_\mathcal{K} v_1(x)
    dx = C_{\mathcal{K}} (f_1(0)+if_2(0)) s^{-4} \\ &\int_{\mathcal{K}\setminus B}
    \abs{\bm{v}\cdot\bm{f}(0)} dx \leq C'_\mathcal{K} \norm{\bm{f}}_{L^\infty}
    s^{-4} \exp(-\delta_\mathcal{K} s \sqrt{h}/2)\\ & \int_{\mathcal{K}\cap B} \abs{
      \bm{v}\cdot \big( \widetilde{\bm{f}}-\widetilde{\bm{f}}(0) \big)} dx
    \leq \norm{\widetilde{\bm{f}}}_{C^\alpha} \sum_{j=1}^2 \int_{\mathcal{K}\cap B}
    \abs{v_j(x)} \abs{x}^\alpha dx \\ &\qquad \qquad \leq C_{\mathcal{K},\alpha}
    \norm{\widetilde{\bm{f}}}_{C^\alpha} s^{-2\alpha-4}
  \end{align*}
  where $C_\mathcal{K}\neq0$ and the other constants $C'_\mathcal{K}$, $C_{\mathcal{K},\alpha}$ are finite and positive. Multiplying
  \cref{intId,threeTerms} by $s^4$ and letting $s\to\infty$ implies
  that
  \[
  C_\mathcal{K}(f_1(0) + if_2(0)) = 0
  \]
  and since $\bm{f}$ is real-valued and $C_\mathcal{K}\neq0$, that
  $\bm{f}(0)=0$.
\end{proof}

\begin{lemma}[Dimension reduction]\label{Lem dim reduce}
Let $D$ be a locally Lipschitz open set in $\R^2$, $M>0$ and $\alpha \in (0,1)$ be constants. Given $\bm{f}\in C^{\alpha}(\overline{D}\times [-L,L];\C^3)$, let $\bm{u}\in H^2(D\times (-L,L);\C^3)$ be a solution of 
\begin{align*}
\begin{cases}
\bm{\mathcal{L}\bm{u}}(x)=\bm{f}(x), & \text{ for }x=(x',x_3)\in D\times (-L,L), \\
\bm{u}(x)=0, \ \bm{T_{\bm{\nu}}\bm{u}}(x)=0 & \text{ for }x=(x',x_3)\in \Gamma\times (-L,L),
\end{cases}
\end{align*}
where $\Gamma\subset \p D$ consists of two connected segments and $\mu>0, 3\lambda+2\mu>0$. Consider $\phi\in C^\infty_c(-L,L)$ and $\xi \in \R$, and we define the dimension reduction operator $\bm{R}_{\xi}$ by 
\begin{align*}
\bm{R}_{\xi}g(x'):=\int_{-L}^L e^{-ix_3 \xi }\phi(x_3)g(x',x_3)dx_3, \text{ for }x'\in D.
\end{align*}
Then one has $\bm{R}_{\xi}\bm{u}\in H^2(D;\C^3)\cap C^\alpha(D;\C^3)$ and there is a function $\bm{F}_{\xi}=\bm{F}_\xi (x')\in C^\alpha(D;\C^3)$ such that $\bm{R}_\xi \bm{u}$ is a solution of 
\begin{align}\label{Reduction elasticity system}
\begin{cases}
\widetilde{\bm{\mathcal{L}}}(\bm{R}_\xi\bm{u}(x'))=\bm{F}_\xi (x') &\text{ for }x'\in D,\\
\bm{R}_\xi u (x')=0, \ \bm{T_\nu}\left(\bm{R}_\xi\bm{u}\right)=0 &\text{ for }x'\in \Gamma,
\end{cases}
\end{align}
where 
\begin{align}\label{reduced elastic operator}
\widetilde{\bm{\mathcal{L}}}:=\left(\begin{matrix}
\lambda \Delta' + (\lambda+\mu)\partial_1^2 & (\lambda+\mu)\partial_1\partial_2 & 0 \\ (\lambda+\mu) \partial_1 \partial_2 &
\lambda \Delta' + (\lambda+\mu)\partial_2^2 & 0 \\
0 & 0 & \lambda\Delta'
\end{matrix}\right)
\end{align}
with $\Delta':=\p_1^2+\p_2^2$ being the Laplace operator with respect to the $x'$-variables. Furthermore, we have 
\begin{align}\label{reduction equation}
	\bm{F}_\xi (x') = \bm{R}_\xi f (x')\text{ for }x'\in \Gamma.
\end{align}
\end{lemma}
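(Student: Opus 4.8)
The plan is to push $\bm{R}_\xi$ through the equation $\bm{\mathcal{L}}\bm{u}=\bm{f}$, using that $\bm{R}_\xi$ integrates only in the edge direction $x_3$. Two elementary facts drive everything. Since the weight $e^{-ix_3\xi}\phi(x_3)$ depends only on $x_3$, the tangential derivatives commute with the reduction, $\partial_j\bm{R}_\xi g=\bm{R}_\xi\partial_j g$ for $j\in\{1,2\}$ (differentiation under the integral sign). For the transverse derivative I integrate by parts in $x_3$; as $\phi\in C^\infty_c(-L,L)$ there are no boundary terms and the derivatives fall on the weight, giving $\bm{R}_\xi(\partial_3 g)=\bm{R}^{(1)}_\xi g$ and $\bm{R}_\xi(\partial_3^2 g)=\bm{R}^{(2)}_\xi g$, where $\bm{R}^{(1)}_\xi,\bm{R}^{(2)}_\xi$ have the same form as $\bm{R}_\xi$ but with $\phi$ replaced by the compactly supported smooth weights $i\xi\phi-\phi'$ and $-\xi^2\phi-2i\xi\phi'+\phi''$. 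The key consequence is that every $x_3$-differentiation is absorbed by $\phi$, so no $x_3$-smoothness of $\bm{u}$ or $\bm{f}$ is ever required.

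Next I would write $\Delta=\Delta'+\partial_3^2$ and $\nabla\cdot\bm{u}=\partial_1 u_1+\partial_2 u_2+\partial_3 u_3$ and split each component of $\bm{\mathcal{L}}\bm{u}=\bm{f}$ into its tangential part (only $\partial_1,\partial_2$) and its transverse part (carrying at least one $\partial_3$). Applying $\bm{R}_\xi$ and commuting the tangential derivatives outward reproduces exactly $\widetilde{\bm{\mathcal{L}}}(\bm{R}_\xi\bm{u})$ from the tangential part, while the transverse part becomes, after the integration by parts above, a finite sum of terms $\bm{R}^{(2)}_\xi u_k$ and $\partial_\ell\bm{R}^{(1)}_\xi u_k$, i.e. reductions of $\bm{u}$ and of its first \emph{tangential} derivatives only. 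Defining $\bm{F}_\xi:=\bm{R}_\xi\bm{f}$ minus this transverse sum then yields $\widetilde{\bm{\mathcal{L}}}(\bm{R}_\xi\bm{u})=\bm{F}_\xi$ in $D$, and on $\Gamma$ the identity $\bm{F}_\xi=\bm{R}_\xi\bm{f}$ once the transverse sum is shown to vanish there.

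For the boundary behaviour I would use a single observation: on each flat face $\Gamma\times(-L,L)$ the two conditions $\bm{u}=0$ and $\bm{T_\nu}\bm{u}=0$ together force the whole gradient to vanish. Indeed $\bm{u}=0$ on the face annihilates all derivatives tangent to it (along $\bm{\tau}$ and along $x_3$), so $\nabla\cdot\bm{u}=\bm{\nu}\cdot\partial_{\bm{\nu}}\bm{u}$ and $\nabla\times\bm{u}=\bm{\nu}\times\partial_{\bm{\nu}}\bm{u}$, and the traction collapses to $\mu\,\partial_{\bm{\nu}}\bm{u}+(\lambda+\mu)\bm{\nu}(\bm{\nu}\cdot\partial_{\bm{\nu}}\bm{u})=0$; splitting $\partial_{\bm{\nu}}\bm{u}=a\bm{\nu}+\bm{b}$ with $\bm{b}\perp\bm{\nu}$ turns this into $(\lambda+2\mu)a\bm{\nu}+\mu\bm{b}=0$, whence $\bm{b}=0$ and $a=0$ because $\mu>0$ and $\lambda+2\mu>\tfrac{4}{3}\mu>0$ (the latter from $3\lambda+2\mu>0$). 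Hence $\nabla\bm{u}=0$ on $\Gamma\times(-L,L)$, so each transverse term, being a reduction of $\bm{u}$ or of some $\partial_\ell\bm{u}$, has vanishing trace on $\Gamma$; this simultaneously gives $\bm{F}_\xi=\bm{R}_\xi\bm{f}$ on $\Gamma$ and, since $\bm{T_\nu}(\bm{R}_\xi\bm{u})$ is a first-order expression assembled from $\bm{R}_\xi u$ and the reductions $\bm{R}_\xi(\partial_j u)$ which all vanish on $\Gamma$, the reduced traction condition $\bm{T_\nu}(\bm{R}_\xi\bm{u})=0$. The regularity $\bm{R}_\xi\bm{u}\in H^2(D)$ follows by commuting the two tangential derivatives through $\bm{R}_\xi$ and using Cauchy--Schwarz in the bounded $x_3$-integral, and then $H^2(D)\hookrightarrow C^\alpha(\overline D)$ in two dimensions gives $\bm{R}_\xi\bm{u}\in C^\alpha$.

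The main obstacle is the Hölder regularity of $\bm{F}_\xi$. The pieces $\bm{R}_\xi\bm{f}$ and $\bm{R}^{(2)}_\xi u_k$ are already $C^\alpha$ (the first because $\bm{f}$ is Hölder in $x'$ uniformly in $x_3$, the second because $\bm{R}^{(2)}_\xi u_k\in H^2(D)\hookrightarrow C^\alpha$), but the mixed terms $\partial_\ell\bm{R}^{(1)}_\xi u_k$ are a priori only in $H^1(D)$, which does not embed into $C^\alpha$ in the plane. To upgrade them I would invoke elliptic (Schauder) estimates for the constant-coefficient strongly elliptic system $\bm{\mathcal{L}}\bm{u}=\bm{f}$: interior estimates give $\bm{u}\in C^{2,\alpha}_{loc}(D\times(-L,L))$, and boundary Schauder estimates on the smooth faces $\Gamma\times(-L,L)$, where the Dirichlet datum $\bm{u}=0$ is smooth and the complementing condition holds, give $\bm{u}\in C^{2,\alpha}$ up to those faces away from the edge. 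Then $\partial_\ell u_k$ is $C^\alpha$ there and its reduction is $C^\alpha$, so $\bm{F}_\xi\in C^\alpha(D)$ with boundary trace $\bm{R}_\xi\bm{f}$ on $\Gamma$. The genuinely delicate point is that this boosting degenerates precisely along the edge $\{x_c\}\times(-L,L)$, where only $H^2$ control of $\bm{u}$ is available; the statement stays clean because $x_c$ lies on $\partial D$ and the value of $\bm{F}_\xi$ needed there is recovered by continuity from $\Gamma$, on which $\bm{F}_\xi=\bm{R}_\xi\bm{f}$ and $\bm{R}_\xi\bm{f}$ is Hölder up to $x_c$.
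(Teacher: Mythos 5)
Your proposal is correct and follows the same skeleton as the paper's proof: commute the tangential derivatives $\partial_1,\partial_2$ with $\bm{R}_\xi$, integrate by parts in $x_3$ so that every transverse derivative lands on the compactly supported weight $e^{-ix_3\xi}\phi$, collect the leftovers (reductions of $\bm{u}$ itself from the $\partial_3^2$ terms, and reductions of first \emph{tangential} derivatives from the mixed $\partial_3\partial_\ell$ terms --- precisely the paper's $I_\xi$ and $II_\xi$) into $\bm{F}_\xi$, and then show these traces vanish on $\Gamma$ because $\bm{u}=\bm{T_\nu u}=0$ on a flat face forces the full gradient to vanish there. Your coordinate-free version of that last step is exactly the paper's boundary-normal-coordinates computation: with $\bm{u}=0$ on the face one gets $\bm{T_\nu u}=\mu\,\partial_{\bm{\nu}}\bm{u}+(\lambda+\mu)\bm{\nu}(\bm{\nu}\cdot\partial_{\bm{\nu}}\bm{u})$, and the splitting $(\lambda+2\mu)a\bm{\nu}+\mu\bm{b}=0$ together with $\mu>0$ and $\lambda+2\mu>\tfrac43\mu>0$ (correctly derived from $3\lambda+2\mu>0$) kills both parts. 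Where you genuinely go beyond the paper is the H\"older regularity of $\bm{F}_\xi$: the paper cites \cite[Lemma~3.4]{Bsource} for $\bm{R}_\xi u_\ell\in H^2(D)\cap C^\alpha(D)$ and is silent on why the mixed terms are H\"older, even though, as you correctly observe, they are a priori only $\partial_\ell\bm{R}_\xi u_k\in H^1(D)$, which in the plane does not even embed into $C^0$ (in the Helmholtz case of \cite{Bsource} no such first-derivative terms occur, so the issue is new here). Your interior Schauder upgrade $\bm{u}\in C^{2,\alpha}_{loc}$ settles $\bm{F}_\xi\in C^\alpha(D)$ on the \emph{open} set $D$, which together with \cref{reduction equation} is all that \cref{Lem dim reduce} asserts, so your argument does prove the stated lemma. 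One caution on your closing remark: recovering the value of $\bm{F}_\xi$ at the corner ``by continuity from $\Gamma$'' is not automatic, since Schauder constants on the faces degenerate as one approaches the edge, so H\"older continuity of $\bm{F}_\xi$ up to the corner point --- which the downstream application in \cref{3Dprop} implicitly invokes when it writes $\bm{F}_\xi\in C^\alpha(\overline{B\cap\mathcal{K}})$ --- does not follow from face-by-face boundary estimates alone; this soft spot is, however, one the paper itself shares rather than resolves, and it does not affect the statement of \cref{Lem dim reduce} as formulated.
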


\begin{proof}
Denote $\bm{u}=(u_\ell)_{\ell=1}^3$, by using \cite[Lemma 3.4]{Bsource}, then one can conclude that $\bm{R}_\xi u_\ell\in H^2(D)\cap C^\alpha(D)$ for $\ell=1,2,3$. Hence, it remains to show that $\bm{R}_\xi \bm{u}$ solves \cref{Reduction elasticity system}.

In order to derive the equation for $\bm{R}_\xi\bm{u}$, note that in the three-dimensional case, the isotropic elastic operator $\bm{\mathcal{L}}$ can be rewritten as 
$$
\bm{\mathcal{L}}=\left(\begin{matrix}
\lambda \Delta + (\lambda+\mu)\partial_1^2 & (\lambda+\mu)\partial_1\partial_2 & (\lambda+\mu)\p_1\p_3 \\ (\lambda+\mu) \partial_1 \partial_2 &
\lambda \Delta + (\lambda+\mu)\partial_2^2 & (\lambda+\mu)\p_2\p_3 \\
(\lambda+\mu)\p_1\p_3 & (\lambda+\mu)\p_2\p_3 & \lambda \Delta +(\lambda+\mu)\p_3 ^2
\end{matrix}\right),
$$
then we also have $\bm{\widetilde{\mathcal{L}}}\bm{u}=\bm{f}-\bm{h}(\bm{u})$, where $$\bm{h}(\bm{u})=\left(\begin{matrix}
\lambda \p_3^2 u_1+(\lambda+\mu)\p_3\p_1 u_3 \\
\lambda \p_3^2 u_2+(\lambda+\mu)\p_3\p_2 u_3 \\
(2\lambda+\mu) \p_3^2 u_3+(\lambda+\mu)\p_3(\p_1 u_1+ \p_2 u_2)
\end{matrix}\right).
$$ 
The Lebesuge dominated convergence theorem and an integration by parts formula yield that 
\begin{align*}
\widetilde{\bm{\mathcal{L}}}(\bm{R}_\xi\bm{u})= \bm{F}_\xi(x'):=&\bm{R}_\xi\bm{f}(x')+I_\xi (x') + II_\xi (x'),
\end{align*}
where 
\begin{align*}
I_\xi (x') = &  -\int_{-L}^L e^{-ix_3 \xi} \phi''(x_3)\left(\begin{matrix}
\lambda u_1 \\ \lambda u_2 \\ (2\lambda + \mu)u_3 
\end{matrix}\right)(x',x_3)dx_3 \\
&+2i\xi \int_{-L} ^L e^{-ix_3 \xi}\phi'(x_3)\left(\begin{matrix}
\lambda u_1 \\ \lambda u_2 \\ (2\lambda + \mu)u_3 
\end{matrix}\right)(x',x_3)dx_3 \\
&+\xi^2\bm{R}_\xi\left(\begin{matrix}
\lambda u_1 \\ \lambda u_2 \\ (2\lambda + \mu)u_3 
\end{matrix}\right)(x'), 
\end{align*}
and 
\begin{align*}
II_\xi (x')= &- i\xi(\lambda+\mu) \bm{R}_\xi \left(\begin{matrix}
\p_1 u_3 \\ \p_2 u_3 \\ \p_1u_1 + \p_2 u_2
\end{matrix}\right)(x')\\
&+(\lambda+\mu)\int_{-L}^L e^{-ix_3\xi} \phi'(x_3)\left(\begin{matrix}
\p_1 u_3 \\ \p_2 u_3 \\ \p_1u_1 + \p_2 u_2
\end{matrix}\right)(x',x_3)dx_3.
\end{align*}
It is easy to see that $I_\xi (x')=0$ for $x'\in \Gamma$ since $u(x',x_3)=0$ for $(x ',x_3)\in \Gamma \times (-L,L)$. 

Next, we want to show that $ II_\xi (x')=0$ for $x'\in \Gamma$. By denoting $\Gamma := S_1 \cup S_2$, where $S_1$, $S_2$ are segments and $\overline{S_1}\cap \overline{S_2}=\{x_0'\}$ is the corner point, we only need to demonstrate that $II_\xi (x')=0$ on $S_1$. By choosing suitable boundary normal coordinates, without loss of generality, we may assume that $S_1 \times (-L,L) \subset \text{span}\{e_1,e_2\}\subset \R^3$ with its normal direction $\nu = e_3$. Here $\{e_1,e_2,e_3\}$ forms the standard orthonormal basis in $\R^3$. Recall that $u \in H^2 _{loc}(\R^3)$, then one has $\frac{\p u_j}{\p x_k}\in H^1_{loc}(\R^3)$ for $j,k\in \{1,2,3\}$. Therefore, $\frac{\p u_j}{\p x_k}|_{\Gamma\times (-L,L)}$ is a well-defined $L^2(\Gamma\times (-L,L))$-function in the trace sense. 
Since $u=0$ on $S_1 \times (-L,L)$, we have $\frac{\p u_j}{\p x_k}=0$ for $j=1,2,3$ and $k=1,2$. Therefore, by using the boundary traction $\bm{T}_{\bm{\nu}}\bm{u}=0$ on $S_1\times (-L,L)$, and that $\mu>0, \lambda+2\mu > 0$ which follow from the assumptions, one can easily see that $\frac{\p u_j}{\p x_k}=0 $ on $\Gamma \times (-L,L)$ for $j,k=1,2,3$. Similar arguments hold when $x'\in S_2$, which proves that $II_\xi(x')=0$ on $\Gamma$. This demonstrates \cref{reduction equation}.
\end{proof}

\begin{remark}
The boundary normal coordinates are useful in studying the inverse boundary value problem. The idea is based on the invariance of the elasticity system via change of variables. For example, in \cite{lin2017boundary,de2017reconstruction}, the authors utilized this technique to study the boundary determination for the isotropic elasticity system from the boundary measurements.
\end{remark}

\begin{proposition} \label{3Dprop}
Let $\Omega \subset \R^3$ be a bounded domain with $0\in \p \Omega$. Let $\theta_m$, $\theta _M$ be the number given by \cref{2Dprop} and $\mathcal{K}$ be the cone defined by \cref{cone}. Suppose that $\Omega$ has an edge of opening angle $\theta_M-\theta_m$, that is, given an origin-centred ball $B\subset \R^2$ and there exists $L>0$ such that 
\begin{align*}
(B\times (-L,L))\cap \Omega = (B\cap \mathcal{K}) \times (-L,L).
\end{align*}

Given $\bm{f}\in C^\alpha ((B\times (-L,L))\cap \Omega;\C^3)$ for some $\alpha \in (0,1)$, let $\bm{u}\in H^2((B\times (-L,L))\cap \Omega;\C^3)$ be a solution of $\bm{\mathcal{L}}\bm{u}=\bm{f}$ in $B\cap \Omega$. Then 
\begin{align*}
\bm{u}=\bm{T}_{\bm{\nu}}\bm{u}=0 \text{ on } B\cap \p \Omega \text{ implies that }\bm{f}(0)=0.
\end{align*}
\end{proposition}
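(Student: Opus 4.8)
The plan is to reduce the three–dimensional edge problem to the two–dimensional corner problem already settled in \cref{2Dprop} by integrating out the edge variable $x_3$, and then to recover $\bm f(0)$ from the reduced data by Fourier inversion. Concretely, I set $D=B\cap\mathcal K$ and let $\Gamma=B\cap\partial\mathcal K$ be the two segments forming the corner of $\mathcal K$ at the origin, so that the $3$D neighbourhood is $D\times(-L,L)$. Fixing $\phi\in C_c^\infty(-L,L)$ and $\xi\in\R$, I apply \cref{Lem dim reduce} to $\bm u$ (the equation in \cref{3Dprop} is already $\bm{\mathcal L}\bm u=\bm f$, with no zeroth–order term to carry along). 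This yields $\bm w:=\bm R_\xi\bm u\in H^2(D;\C^3)\cap C^\alpha(D;\C^3)$ solving $\widetilde{\bm{\mathcal L}}\bm w=\bm F_\xi$ in $D$ with $\bm w=0$ and $\bm T_{\bm\nu}\bm w=0$ on $\Gamma$, together with the crucial relation $\bm F_\xi=\bm R_\xi\bm f$ on $\Gamma$ from \cref{reduction equation}. Moreover, inspecting the proof of \cref{Lem dim reduce}, on $\Gamma\times(-L,L)$ every first derivative of $\bm u$ vanishes, so $\partial_k(\bm R_\xi u_j)=\bm R_\xi(\partial_k u_j)=0$ on $\Gamma$ for $k=1,2$; hence the full planar gradient $\nabla'\bm w$ vanishes on $\Gamma$ as well, giving $\bm w$ vanishing Cauchy data there.

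Second, I exploit the block–diagonal structure of $\widetilde{\bm{\mathcal L}}$ in \cref{reduced elastic operator}. The pair $(w_1,w_2)$ solves the planar Navier system $\bm{\mathcal L}(w_1,w_2)=(F_{\xi,1},F_{\xi,2})$, with $\bm{\mathcal L}$ the operator of \cref{sourceElasticity}, and has vanishing Cauchy data on $\Gamma$. Rewriting this as $\bm{\mathcal L}(w_1,w_2)+\omega^2(w_1,w_2)=(F_{\xi,1},F_{\xi,2})+\omega^2(w_1,w_2)$ for some fixed $\omega>0$ puts it exactly in the form required by \cref{2Dprop}, and since $w_1(0)=w_2(0)=0$ the right–hand side at the corner equals $(F_{\xi,1}(0),F_{\xi,2}(0))$, so \cref{2Dprop} gives $F_{\xi,1}(0)=F_{\xi,2}(0)=0$. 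The scalar component satisfies $\lambda\Delta'w_3=F_{\xi,3}$ with vanishing Cauchy data; when $\lambda\neq0$ the scalar (Helmholtz/Laplace) corner–vanishing result of \cite{Bsource}—equivalently the argument of \cref{2Dprop} run with the single harmonic test function $e^{-s\sqrt z}$—yields $F_{\xi,3}(0)=0$, while when $\lambda=0$ the reduced third equation reads $F_{\xi,3}\equiv0$. Hence $\bm F_\xi(0)=0$ for every $\xi\in\R$.

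Third, since the corner $0$ lies on $\Gamma$, \cref{reduction equation} gives $\bm R_\xi\bm f(0)=\bm F_\xi(0)=0$ for all $\xi\in\R$. But $\xi\mapsto\bm R_\xi\bm f(0)=\int_{-L}^L e^{-ix_3\xi}\phi(x_3)\bm f(0,x_3)\,dx_3$ is precisely the Fourier transform of the continuous, compactly supported $\C^3$–valued map $x_3\mapsto\phi(x_3)\bm f(0,x_3)$, so this function vanishes identically. Choosing $\phi$ with $\phi(0)\neq0$ and using continuity of $\bm f$ then gives $\bm f(0)=\bm f(0,0)=0$, which is the assertion.

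The main obstacle I anticipate lies not in the Fourier step but in making the decoupling in the second paragraph fully rigorous: one must ensure the reduced condition $\bm T_{\bm\nu}\bm w=0$ genuinely restricts to the planar elastic traction of $(w_1,w_2)$ and to the conormal derivative of $w_3$, so that \cref{2Dprop} and the scalar result truly apply. This is exactly what the observation $\nabla'\bm w=0$ on $\Gamma$ is designed to circumvent: vanishing full Cauchy data forces all of these traction quantities to vanish automatically, independently of the precise normalization of $\bm T_{\bm\nu}$ for the reduced operator $\widetilde{\bm{\mathcal L}}$. A secondary subtlety is the genuinely degenerate case $\lambda=0$ in the scalar block, which must be handled separately as above rather than through the Helmholtz argument.
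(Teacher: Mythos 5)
Your proposal is correct and follows essentially the same route as the paper's proof: apply the dimension reduction of \cref{Lem dim reduce} to pass to the planar system \cref{Reduction elasticity system}, conclude $\bm{F}_\xi(0)=0$ via the two-dimensional corner result, and recover $\bm{f}(0)=0$ by Fourier inversion in the $x_3$-variable. In fact you are more careful than the paper itself, whose proof invokes \cref{2Dprop} directly for the full three-component reduced system even though that proposition is stated only for the $\C^2$-valued planar Navier operator: your explicit decoupling of \cref{reduced elastic operator} into the $2\times2$ elastic block (with the missing $\omega^2$ zeroth-order term absorbed into the source, exactly as the paper's own proof of \cref{2Dprop} does with $\widetilde{\bm{f}}=\bm{f}-\omega^2\bm{u}$) and the scalar block $\lambda\Delta' w_3$ handled via the harmonic test function of \cite{Bsource}, together with the degenerate case $\lambda=0$ and the observation that $\nabla'\bm{w}=0$ on $\Gamma$ makes all reduced traction conditions automatic, supplies details that the paper leaves implicit.
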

\begin{proof}
The proof is similar to the proof of \cite[Proposition 3.5]{Bsource}. For the sake of completeness, we offer a detailed proof here. By the Sobolev embedding, we know that $H^2$ embeds to $C^{1/2}$ and we may assume $\alpha \leq 1/2$ without loss of generality.
By \cref{Lem dim reduce}, given any $\xi \in \R$, there is a $\bm{F}_\xi \in C^\alpha(\overline{B\cap \mathcal{K}})$, such that one can find a solution $\bm{U}\in H^2(B\cap \mathcal{K})\cap C^\alpha (\overline{B\cap \mathcal{K}})$ fulfilling $\widetilde{\bm{\mathcal{L}}}\bm{U}=\bm{F}_\xi$ in $B\cap \mathcal{K}$, where $\widetilde{\bm{\mathcal{L}}}$ is defined by \cref{reduced elastic operator}. In addition, $\bm{U}=\bm{T}_{\bm{\nu}}\bm{U}=0$ on $B\cap \p \mathcal{K}$, via \cref{2Dprop}, then we obtain $\bm{F}_\xi (0)=0$. Finally, recall that 
$$
0=\bm{F}_\xi (0)=\int_{-L}^L e^{-ix_3 \xi }\phi (x_3)\bm{f}(0,x_3)dx_3,
$$
for any smooth cut-off functions $\phi(x_3)\in C^\infty_c((-L,L))$ and for any $\xi \in \R$. The Fourier inversion formula implies that $\bm{f}(0)=0$.

\end{proof}

\section{Proof of Theorems}\label{Section 4}
In the end of this paper, we prove our theorems which stated in \cref{Section 1}.

\begin{proof}[Proof of \cref{thm1}]
  Rellich's lemma for the Helmholtz equation (see e.g. Lemma~2.11 in
  \cite{CK}) and the unique continuation principle imply that
  $\bm{u}_p = \bm{u}_s = 0$ in the connected component of
  $\R^n\setminus\overline{\Omega}$ that reaches infinity. Hence
  $\bm{u}=0$ and $\bm{T_\nu u}=0$ on the boundary of the corner or
  edge. The claim follows from \cref{2Dprop} or \cref{3Dprop}.
\end{proof}

\begin{proof}[Proof of \cref{thm2}]
  By Rellich's lemma for the Helmholtz equation again and the unique continuation principle $\bm{u}_p=\bm{u'}_p$,
  $\bm{u}_s = \bm{u'}_s$ in $\R^n\setminus
  \overline{\Omega\cup\Omega'}$. Assume
  $\Omega\not\subset\Omega'$. Then by convexity there is a corner (2D)
  or edge (3D) point
  $x_c\in\partial\Omega\setminus\overline{\Omega'}$. Since
  $\bm{u}=\bm{u'}$ outside $\overline{\Omega\cup\Omega'}$ we have
  $\bm{u}=\bm{u'}$ and $\bm{T_\nu u} = \bm{T_\nu u'}$ on
  $\partial\Omega$ near $x_c$. Set $\bm{w} = \bm{u}-\bm{u'}$. We have
  \[
  \lambda\Delta \bm{w} + (\lambda+\mu)\nabla \nabla\cdot\bm{w} +
  \omega^2\bm{w} = \bm{f}
  \]
  in $\Omega$ near $x_c$ with $\bm{w}\in H^2$. \Cref{2Dprop} and
  \cref{3Dprop} imply that $\bm{\varphi}(x_c)=0$. But this is a
  contradiction since $\bm{\varphi}\neq0$ on $\partial\Omega$. Hence
  $\Omega\subset\Omega'$. The same proof with $\Omega,\Omega'$
  switched shows that $\Omega'\subset\Omega$. Hence $\Omega=\Omega'$.

  \smallskip
  Next, let $x_c$ be a vertex (2D) or an edge point (3D) of
  $\partial\Omega = \partial\Omega'$. If $\bm{w}=\bm{u}-\bm{u'}$ then
  this time
  \[
  \lambda\Delta \bm{w} + (\lambda+\mu)\nabla \nabla\cdot\bm{w} +
  \omega^2\bm{w} = \bm{f}-\bm{f'}
  \]
  in $\Omega$ with $\bm{w}\in H^2$. Rellich's lemma for the Helmholtz
  equation and the unique continuation
  principle for the Navier equations imply that $\bm{w}=0$ and
  $\bm{T_\nu w} = 0$ on $\partial\Omega$ near $x_c$ in this case
  too. \cref{2Dprop} and \cref{3Dprop} imply $\bm{f} = \bm{f'}$ at
  $x_c$.
\end{proof}

Finally, we can prove the third main theorem in this paper.

\begin{proof}[Proof of \cref{ITP}]
  Set $\bm{f}=-\omega^2 V\bm{v}$ and $\bm{u}=\bm{v}-\bm{w}$. In two
  and three dimensions $H^2$ embeds into $C^\alpha$ for $0<\alpha<1/2$
  the latter of which we may assume. So $\bm{v}-\bm{w}$ is
  H\"older-continuous, and thus both $\bm{v}$ and $\bm{w}$ are too if
  either one is in $C^\alpha$. These functions satisfy
  \[
  \lambda \Delta \bm{u} + (\lambda+\mu)\nabla \nabla\cdot
  \bm{u}+\omega^2 \bm{u} = \bm{f}
  \]
  with $\bm{u}\in H^2(\Omega)$, $\bm{u}=\bm{T_\nu u}=0$ on
  $\partial\Omega$, and $\bm{f} \in
  C^\alpha(\overline\Omega)$. \Cref{2Dprop} and \cref{3Dprop} imply
  that $\bm{f}(x_c) = 0$, so if $V(x_c)\neq0$ then $\bm{v}(x_c)=0$ and
  since $\bm{v}=\bm{w}$ on $\partial\Omega$, so is $\bm{w}(x_c)=0$.
\end{proof}

\section{Appendix} \label{appendix}

In the end of this paper, we demonstrate that a non-zero source function $\bm{f}$ may generate zero elastic far fields $\bm{u}_s^\infty$ and $\bm{u}_p^\infty$ in $\R^3$. Recall that the elastic far fields are given by
$$
\bm{u}_s^\infty (\bm{e})= \Pi_{\bm{e}^\perp}\left(\int_{\R^3}e^{-i\omega_s\bm{e}\cdot y}\bm{f}(y)dy\right), \ 
\bm{u}_p^\infty (\bm{e})= \Pi_{\bm{e}}\left(\int_{\R^3}e^{-i\omega_p\bm{e}\cdot y}\bm{f}(y)dy\right),
$$
for direction $\bm{e}\in \mathbb{S}^2$, where $\Pi_{\bm{e}}$ is the projection with respect to $\bm{e}$. Now, simply take $\bm{f}:=e_1 \chi _{B(0,1)}$, where
\[
\chi_{B(0,1)}=\begin{cases}
1, &\text{ in }B(0,1)\\
0, &\text{ otherwise }
\end{cases}
\]
is the characteristic function and $e_1 =(1,0,0)$.

After taking this special source function $\bm{f}=e_1 \chi _{B(0,1)}  \in L^\infty(\R^3;\C^3)$, we want to find some suitable Lam\'e parameters $(\lambda, \mu )$ such that $\bm{u}_s^\infty(\bm{e})=0$ and $\bm{u}_p^\infty(\bm{e})=0$ for any direction $\bm{e} \in \mathbb{S}^{2}$. 
Moreover, the integral
$$
\int_{\R^3} e^{-i\omega_s \bm{e}\cdot y}\chi _{B(0,1)}dy = \int_{\R^3} e^{-2\pi i\left(\frac{\omega_s}{2\pi}\bm{e}\right)\cdot y}\chi_{B(0,1)}dy
$$
can be regarded as the Fourier transform of the characteristic function $\chi_{B(0,1)}$.
By using the representation formula in \cite[Appendix B.5]{grafakos2008classical}, we have 
\begin{align}\label{equation omega_s}
	\int_{\R^3} e^{-i\omega_s \bm{e}\cdot y}\chi _{B(0,1)}dy=\dfrac{J_{\frac{3}{2}}(\omega_s )}{\left(\frac{\omega_s}{2\pi}\right)^{3/2}},
\end{align}
where $J_{\frac{3}{2}}$ is the \emph{Bessel function} of order $\frac{3}{2}$.
Similarly, one also has 
\begin{align}\label{equation omega_p}
\int_{\R^3} e^{-i\omega_p \bm{e}\cdot y}\chi _{B(0,1)}dy=\dfrac{J_{\frac{3}{2}}(\omega_p)}{\left(\frac{\omega_p}{2\pi}\right)^{3/2}}.
\end{align}

Therefore, from \cref{equation omega_s,equation omega_p}, having the elastic far fields $\bm{u}_s^\infty = \bm{u}_p^\infty =0$ can be reduced to seeking the Lam\'e parameters $(\lambda,\mu)$ such that 
\begin{align}\label{appendix equ 1}
J_{\frac{3}{2}}(\omega_s)=J_{\frac{3}{2}}(\omega_p )=0,
\end{align}
where $ \omega_{s}={\omega}/{\sqrt{\mu}}$ and $\omega_{p}={\omega}/{\sqrt{\lambda+2\mu}}$.
Let $A,B>0$ be any solutions of $J_{\frac{3}{2}}(t)=0$ with $B$ greater than $A$ large enough. Combining with \cref{appendix equ 1}, without loss of generality, then we can choose 
$$
A=\dfrac{\omega}{\sqrt{\lambda+2\mu}} \text{ and }B=\dfrac{\omega}{\sqrt{\mu}}.
$$
This is equivalent to taking the Lam\'e parameters as
$$
\lambda=\left(\frac{\omega}{A}\right)^2-2\left(\frac{\omega}{B}\right)^2 \text{ and }
\mu = \left(\frac{\omega}{B}\right)^2.
$$
Note that by choosing $B$ large enough, the Lam\'e parameters $\lambda$ and $\mu$ satisfy the strong convexity condition \cref{strong convexity condition}.
Hence, for any frequency $\omega>0$, there are examples $(\lambda,\mu)$ and source $\bm{f}$ that is non-zero, but its far fields are zero. 

\textbf{Acknowledgment.}  The authors appreciate Professor Ikehata and the anonymous referees for some useful comments to improve this work.  Y.-H. Lin is partially supported by the Academy of Finland, under the project number 309963.

\bibliographystyle{abbrv}
\bibliography{ref}{}

\end{document}